\author{{{\bf M. Boostan, S. Golalizadeh and }}  {\bf N. Soltankhah \thanks{Corresponding author: soltan@alzahra.ac.ir, soltankhah.n@gmail.com }} \\
{\footnotesize  {\bf Department of Mathematical Sciences}}\\  {\footnotesize {\bf Alzahra University}}\\ {\footnotesize {\bf P.O. Box 19834}}\\ {\footnotesize {\bf Tehran, I.R. Iran.}}}
\title {Super-simple $(v,4,2)$ directed designs and a lower bound for the minimum size of  their defining set }
\date{}
\newtheorem{main theorem}{Main Theorem}
\newtheorem{proposition}{Proposition}
\newtheorem{remark}{Remark}
\newtheorem{lemma}{Lemma}
\newtheorem{corollary}{Corollary}
\newtheorem{construction}{Construction}
\newcommand{\B}{\mathcal{B}}
\newcommand{\G}{\mathcal{G}}
\newcommand{\D}{\mathcal{D}}
\begin{document}
\maketitle
\begin{abstract}
In this paper, we show that for all $v\equiv 1$ (mod 3), there exists
a super-simple $(v,4,2)$ directed design. Also, we show that for these parameters there exists a super-simple $(v,4,2)$ directed design whose each defining set  has at least a half of the  blocks.
\end{abstract}
{\bf Keywords:} Super-simple directed design, Smallest defining set, Trade, Pairwise balanced design, Group divisible design, Directed group divisible design
\section{Introduction and preliminaries}
A $t-(v,k, \lambda)$ design with parameters $v,k,\lambda$ is a pair $(X,\B)$ where $X$ is a set of $v$ elements,
 and $\B$ is a family of $k-$subsets of distinct elements of $X$, called blocks, with the property that every t-tuple of distinct elements
  occurs in exactly $\lambda$ blocks. The problem of evaluation $v$ such that there exists a $t-(v,k,\lambda)$ design for a specific $k,\lambda$
  is one of the most important problems in combinatorics.  \\

Some generalizations has been introduced for the concept of designs. Gronau and
Mullin \cite{gronau} for the first time, introduced a new definition of block
designs called super-simple block designs. A super-simple $t-(v,k,\lambda)$  design  is a block design such that any two blocks of the design
intersect in at most $t$ points. A simple block design is a block design such that it has no repeated blocks.
The existence of super-simple $(v,4,\lambda)$ designs have been
characterized for $2\leq \lambda \leq 9$ except $\lambda =7$, see \cite{chen3, chen6, chen1, 
chen5, chen0, gronau, zhang}. Also, the existence of super-simple
$(v,5,\lambda)$ designs have been characterized for $2\leq \lambda \leq 5$,   see  \cite{chen11, chen2, chen4, hans}.\\

A  $t-(v,k,\lambda)$ directed design is  a  pair $(X,\B)$, where $X$  is a $v$-set, $\B$ is a collection of ordered k-tuples of distinct elements of $X$ (called blocks) such that each ordered t-tuples of distinct elements of $X$ appears in exactly $\lambda$ blocks. By $t-(v,k,\lambda)$DD, we mean a $t-(v,k,\lambda)$ directed design. A  $t-(v,k,\lambda)$DD is super-simple if its underlying $t-(v,k,(t!)\lambda)$ design is super-simple.\\

In the rest of this paper, we use various types of combinatorial objects such as  trade, defining set, group divisible design(GDD), directed group divisible design(DGDD) and pairwise balanced design(PBD), that we review them here.\\

A set of blocks which is a subset of a unique $ t-( v, k,\lambda)$DD is said to be a defining
set of the directed design.
A minimal defining set is a defining set, no proper subset of which is a defining
set. A smallest defining set, is a defining set with the
smallest cardinality.
 
A $ (v,k,t) $ directed trade of volume  $s$ consists of two disjoint collections 
$ T_{1} $ and $ T_{2} $, each of  $s$ blocks, such that every ordered t-tuple of distinct elements of $ V $ 
is covered by precisely the same number of blocks of $ T_{1} $ as of $ T_{2} $.
Such a directed trade is usually denoted by $ T=T_{1}-T_{2} $. 
Blocks in $ T_{1} (T_{2}) $ are called the positive (respectively, negative)
 blocks of $ T $. In a $(v,k,t)$ directed trade, both
collections of blocks cover the same set of elements. This set of elements is called
the foundation of the trade. In \cite{soltan}, it has been shown that the minimum volume of a $(v,k,t)$ directed trade is $2^{\lfloor t/2\rfloor}$ and that directed trades of minimum volume and minimum foundation exist. In some parts of this paper, we handle with a special type of  directed trade, called a cyclical trade, defined as follows. Let $T=T_1-T_2$ be a $(v,k,t)$ directed trade of volume s with blocks $b_1,\cdots , b_s$ such that each pair of consecutive blocks of $T_1$ $(b_i,b_{i+1}, i=1,\cdots ,s$ (mod s)) is a trade of volume 2.\\
 
  If $  \D=(V,\B)  $ is a directed design, and if $ T_{1} \subseteq \B $,
  we say that $ \D $ contains the directed trade $ T $. For example the following super-simple $2-(13,4,2)DD $
$$\{(6,5,0,2), (0,1,6,4), (0,1,11,5), (1,0,3,9)\}$$  by (+1 mod  13)  
 contains the following directed trade:
\begin{center}
\begin{tabular}{cc}
$ T_{1}$  & $ T_{2}$ \\
 \hline (0,1,11,5) & (1,0,11,5) \\
 (1,0,3,9) & (0,1,3,9) \\
\end{tabular}
  
\end{center}

 Defining sets for directed designs are strongly related to trades. This relation is illustrated by the following result.
\begin{proposition}\cite{es.}
Let $ \D = (V,\B) $ be a $ t-(v,k,\lambda)DD$  and let $ S \subseteq \B $,
 then $ S $ is a defining set of $ \D $ if and only if $ S $ contains a block of every $ (v,k,t) $
 directed trade $ T=T_{1}-T_{2} $ such that $ T $ is contained in $ \D $.
\end{proposition}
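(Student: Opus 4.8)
The plan is to prove both implications by a swapping argument that exchanges the positive and negative parts of a trade, relying on the defining property that a trade preserves the number of times each ordered $t$-tuple is covered. A useful preliminary observation is that since $S\subseteq\B$, and for any trade $T=T_1-T_2$ contained in $\D$ the negative blocks $T_2$ are disjoint from $\B$ (because $T_1\subseteq\B$ and $T_1\cap T_2=\emptyset$), the phrase ``$S$ contains a block of $T$'' is equivalent to the condition $S\cap T_1\neq\emptyset$. I would state this at the outset so that both directions can be phrased purely in terms of $T_1$.

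For the reverse implication, suppose $S$ meets $T_1$ for every directed trade $T=T_1-T_2$ contained in $\D$, and assume toward a contradiction that $S$ is not a defining set. Then there is a second $t$-$(v,k,\lambda)$DD $\D'=(V,\B')$ with $\B'\neq\B$ and $S\subseteq\B'$. First I would argue that $T_1:=\B\setminus\B'$ and $T_2:=\B'\setminus\B$ form a directed trade contained in $\D$: the blocks common to $\B$ and $\B'$ cover each ordered $t$-tuple the same number of times in both designs, and since every ordered $t$-tuple appears exactly $\lambda$ times in each of $\B$ and $\B'$, the two difference collections $T_1$ and $T_2$ must cover every ordered $t$-tuple equally often. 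As $\B'\neq\B$ gives $T_1\neq\emptyset$, this is a genuine trade contained in $\D$, so by hypothesis $S\cap T_1\neq\emptyset$; that is, $S$ contains a block of $\B\setminus\B'$, contradicting $S\subseteq\B'$. Hence $\D$ is the unique design containing $S$, so $S$ is a defining set.

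For the forward implication, suppose $S$ is a defining set and, toward a contradiction, that some directed trade $T=T_1-T_2$ contained in $\D$ satisfies $S\cap T_1=\emptyset$. I would form the collection $\B':=(\B\setminus T_1)\cup T_2$ and verify that it is again a $t$-$(v,k,\lambda)$DD: replacing the positive blocks $T_1$ by the negative blocks $T_2$ alters the coverage of no ordered $t$-tuple, by the definition of a trade, so each tuple still appears exactly $\lambda$ times. Since $T_1\cap T_2=\emptyset$ and $T_1\neq\emptyset$, any block of $T_1$ lies in $\B$ but not in $\B'$, so $\B'\neq\B$; and $S\subseteq\B\setminus T_1\subseteq\B'$ because $S\subseteq\B$ and $S\cap T_1=\emptyset$. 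Thus $\B'$ is a second design containing $S$, contradicting that $S$ is a defining set.

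The coverage counts in both directions are immediate from the trade definition; the step that requires genuine care is the forward direction's claim that $\B'$ is truly distinct from $\B$, which is precisely where the disjointness $T_1\cap T_2=\emptyset$ and the nonemptiness of $T_1$ enter, guaranteeing that a trade swap removes at least one block that is not reintroduced and hence cannot leave the collection unchanged.
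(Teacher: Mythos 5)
The paper itself offers no proof of this proposition: it is quoted with a citation to \cite{es.}, so there is no in-paper argument to compare against. Your two-sided swapping argument is the standard proof of this result (it is the directed analogue of Gray's theorem relating defining sets to trades), and its core is correct: swapping $T_1$ for $T_2$ manufactures a second design when $S$ misses a trade, and taking $T_1=\B\setminus\B'$, $T_2=\B'\setminus\B$ manufactures a trade when $S$ lies in a second design.

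Two blemishes deserve repair. First, your preliminary observation is invalid as stated: from $T_1\subseteq\B$ and $T_1\cap T_2=\emptyset$ one cannot conclude that $T_2$ is disjoint from $\B$, since a negative block may well coincide with a block of $\B\setminus T_1$ when $\lambda>1$ (and $\lambda=2$ is precisely the case of interest in this paper). The observation is, however, dispensable: simply adopt the convention of \cite{es.} that ``$S$ contains a block of $T$'' means $S\cap T_1\neq\emptyset$, and both of your implications go through verbatim. Second, in the reverse direction, $\B'\neq\B$ immediately yields only $T_1\cup T_2\neq\emptyset$, not $T_1\neq\emptyset$; to exclude $T_1=\emptyset$, note that each block contributes exactly $k(k-1)\cdots(k-t+1)$ ordered $t$-tuple incidences, so the equal-coverage property of the pair $(T_1,T_2)$ forces $|T_1|=|T_2|$, whence both are nonempty. (Equivalently, $|\B|=|\B'|$ is forced by the parameters, since $|\B|\,k(k-1)\cdots(k-t+1)=\lambda\, v(v-1)\cdots(v-t+1)$ for any $t$-$(v,k,\lambda)$DD.) With these two small repairs your argument is complete and matches the proof in the cited source.
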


Each defining set of a $ t-(v,k,\lambda)DD $, $\D$ contains at least one block in every trade in 
$ \D $. In particular, if $ \D $ contains $ m $ mutually disjoint directed trades then the smallest
defining set of $ \D $ must contain at least $ m $ blocks. If a directed design $ \D $ contains a cyclical trade of volume $s$, then each defining set for $ \D $ must contain at least $\lfloor \frac{s+1}{2} \rfloor$   blocks of $ T_{1} $.\\

The concept of directed trades and defining sets for directed designs
were investigated in articles \cite{es., soltan}.\\
 
A pairwise balanced design of order $ v $ with block sizes $ k \in  $K 
or PBD$ (v,K,\lambda) $ is a pair $ (V,\B) $, where $ V $ is a
$ v-$set, and $ \B $ is a collection of subsets (called blocks) of 
$V $ such that if $ B \in \B $ then $ \vert B \vert \in$K and every pair of distinct elements of 
$ V $ appears in precisely $ \lambda $ blocks. A
PBD$(v,K,1)$ is denoted  by PBD$(v,K)$.\\

A group divisible design of order $ v $ with block sizes $ k \in $K or 
$ (K,\lambda)-$GDD of type $g_1^{u_1}g_2^{u_2}...g_N^{u_N}$,
where $ u_{1},u_{2},...,u_{N} $ are non-negative integers, 
is a triple $ (V,\G,\B)$, where $ V $ is a $ v- $set that is partitioned into parts (called groups)
of sizes $ g_{1},g_{2},...,g_{N} $, and $ \B $ is a collection of subsets (called blocks)
of $ V $ such that if $ B \in \B $ then  $ \vert B \vert \in$K and every 
pair of distinct elements of $ V $ appears in precisely $ \lambda $ blocks or one
group but not in both. A $ (K,1)-$ GDD and $(\{k\},\lambda)-$GDD are denoted  by $K-$GDD and $(k,\lambda)-$GDD, respectively.\\

One can see deleting one point from a PBD$(v,K)$ gives a $K$-GDD of type 
 $g_1^{u_1}g_2^{u_2}...g_N^{u_N}$, where $ u_{1},u_{2},...,u_{N} $ 
are non-negative integers and for all $ i=1,2,...,N $, $ g_{i}=k_{i}-1; k_{i} \in K $.\\

A transversal design $TD(k,\lambda;n)$ is a $ (k,\lambda)-$GDD of type $ n^{k} $. When $\lambda=1$, we simply write $TD(k,n)$.\\

A directed group divisible design $ (K,\lambda)- $DGDD is a group divisible design  
in which every block is ordered and each ordered pair formed from distinct 
elements of different groups occurs is exactly $ \lambda $ blocks. A
$ (K,\lambda)- $DGDD of type $(g_{1}^{u_{1}},g_{2}^{u_{2}},...,g_{N}^{u_{N}})$  
 is super-simple if its underlying $ (K,2\lambda)- $GDD of type  
 $(g_{1}^{u_{1}},g_{2}^{u_{2}},...,g_{N}^{u_{N}})$  is super-simple.\\
 
Some results have been obtained on  $2-(v,k,\lambda)$DDs for special $k$ and $\lambda$  and their defining set. For example,  in \cite{es.}, has been proved that if $\D$ be a $2-(v,3,1)$DD, then a defining set of $\D$ has at least $\frac{v}{2}$ blocks. In \cite{Grannell}, it has been shown that for each admissible value of $v$, there exists  a simple $2-(v,3,1)$DD whose smallest defining sets have at least a half of the blocks. In
\cite{soltankhah1}, it has been shown that the necessary and
sufficient condition for the existence of a super-simple
$2-(v,4,1)$DD is $v\equiv 1$ (mod 3) and for these values of $v$ except $v=7$, there exists a super-simple $2-(v,4,1)$DD whose  smallest
defining sets have at least a half of the blocks. Also,
in \cite{soltankhah2}, it has been shown  that for all
$v\equiv 1,5$ (mod 10) except $v=5,15$, there exists a super-simple
$2-(v,5,1)$DD such that their smallest defining sets have at least a
half of the blocks.
In this paper, we prove that the necessary and sufficient condition
for the existence of a super-simple $2-(v,4,2)$DD is $v\equiv 1$ (mod
3) $(v\geq 10)$ and for these values of $v$, there exists  a super-simple
$2-(v,4,2)$DD  whose their smallest defining sets have at least a
half of the blocks. For this goal, we introduce the following quantity\\
$$d=\frac{the\  total \  number \ of \  blocks \   in \   a \   smallest  \   defining \  set \   in \   \D}{the \  total \   number \   of \  blocks \  in \  \D}$$
and we show for all admissible values of $v$, $d\geq \frac{1}{2}$.
\section{Recursive Constructions}
For some values of $v$, the existence of super-simple $(v,4,2)$DD will be proved by recursive constructions that we present them in this section for later use.
\begin{construction}(Weighting)\label{1}. Let $ (X,\G,\B) $ be a super-simple GDD with index $\lambda_1$, and let $ w:X\rightarrow Z^+ \bigcup \{0\} $ be a weight function on $X$, where $Z^+$ is the set of positive integers. Suppose that for each block $ B\in \B $, there exists a super-simple $ (k,\lambda_2)-GDD $ of type $ \{w(x): x\in B\} $. Then there exists a super-simple $(k,\lambda_1\lambda_2)-GDD$ of type $\{\sum_{x\in G_i} w(x):  G_i\in \G \}$.
\end{construction}
\begin{remark}
In the above construction, if the master GDD and input designs are directed and for all of them, we have $d\geq \frac{1}{2}$ then the resulted GDD is a super-simple  DGDD and for it, we have $d\geq \frac{1}{2}$.
\end{remark}
\begin{construction}
If there exists a super-simple $(k, \lambda)$-DGDD of type $g_1^{u_1}\cdots g_t^{u_t}$ with $d\geq \frac{1}{2}$
 and a super-simple
$(g_i +\eta, k,\lambda)$DD for each $ i (1\leq i\leq t)$ with $d\geq \frac{1}{2}$, then there exists a super-simple $(\sum_{i=1}^t  g_iu_i+\eta, k, \lambda)$DD with $d\geq \frac{1}{2}$, where
$\eta = 0 \ \  or  \ \ 1$.
\end{construction}
\begin{proof}
Replacing each $G_i\in \G$, $\vert G_i \vert =g_i$ with a super-simple $(g_i+\eta,k,\lambda)$DD gives a super-simple $(\sum_{i=1}^t  g_iu_i+\eta, k, \lambda)$DD. Obviously, if for master DGDD and for all input designs $d\geq \frac{1}{2}$, then for the resulted design, the inequality $d\geq \frac{1}{2}$ should be hold.
\end{proof}
\begin{construction}
If there exists a $K-GDD $ of type $g_1^{u_1}g_2^{u_2}...g_n^{u_n}$, a super-simple $(\alpha g_i +1,4,2)$DD for each i, $i=1,2,...,n$ and a super-simple $4$-DGDD of type $\alpha^k$ for each $k \in K$, then there exists a super-simple $(\alpha \sum_{i=1} ^n g_i u_i +1,4,2)$DD.
\end{construction}
\begin{proof} Let $(X,\G,\B)$ be a group divisible design with blocks of size $k\in K$ and groups of size $g_i$ for $i=1,2,\cdots, n$. Let $Y=X\times Z_{\alpha} \cup \{\infty\}$. Replacing each element $x\in X$ with $\alpha$ new points $\{x_1,x_2,\cdots, x_{\alpha}\}$ and each block $B\in \B$ of size $k\in K$ with a super-simple $4$-DGDD of type $\alpha^k$ such that its groups are $\{x_1,x_2,\cdots, x_{\alpha}\ : x\in X\}$ gives us a super-simple $4$-DGDD of type $(\alpha g_1)^{u_1}(\alpha g_2)^{u_2} \cdots (\alpha g_n)^{u_n}$. Finally, filling in the holes with a new point $\infty$ and using a  super-simple $2-(\alpha g_i+1,4,2)$DDs, we obtain a super-simple $2-(\alpha \sum_{i=1} ^n g_i u_i +1,4,2)$DD.
\end{proof}
\begin{remark}
In the above construction, if for master GDD and for all input designs, we have $d\geq \frac{1}{2}$, then for the resulting design, the inequality $d\geq \frac{1}{2}$ should be hold.
\end{remark}
\section{Direct Construction}
In this section, we construct some super-simple $2-(v,4,2)$DDs for some small admissible values of $v$ by direct construction and for these values of $v$, we show that the parameter $d$ for constructed designs is at least $\frac{1}{2}$. In the rest of paper, by $(v,4,2)$DD, we mean $2-(v,4,2)$DD.
\begin{lemma}\label{3} There exists a super-simple $(v,4,2)$DD for all $v\in \{10,13,16,19,22\\$,$25,28,31,34,40,43,58,67,79,94,103\}$,
whose their smallest defining sets have at least a half of the
blocks.
\end{lemma}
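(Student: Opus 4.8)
The plan is to prove this lemma by explicit direct construction, treating it as the base-case engine that feeds the recursive constructions of Section~2. For each listed value $v$, I would exhibit a concrete super-simple $(v,4,2)$DD together with an explicit cyclical directed trade structure that forces the lower bound $d\geq\frac{1}{2}$. The natural strategy is to present each design in a cyclic (difference-based) form, as already illustrated in the introduction for the $v=13$ example: choose a small set of base blocks and develop them under the action of $Z_v$ (or $Z_{v-1}\cup\{\infty\}$ for values where a fixed point is convenient). This keeps the verification uniform and compact, since checking that each ordered pair of distinct elements is covered exactly twice reduces to a finite difference computation on the base blocks rather than an inspection of all $\binom{v}{2}$ pairs.

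First I would, for each $v$, list the base blocks and state the development rule, then verify two things: (i) that the collection is genuinely a $(v,4,2)$DD, i.e. every ordered pair occurs in exactly two blocks, which amounts to checking that the multiset of directed differences arising from the base blocks covers each nonzero element of $Z_v$ the correct number of times; and (ii) that the design is super-simple, i.e. the underlying $(v,4,4)$ design has any two blocks meeting in at most two points, which again reduces to a finite check on differences between (developed) base blocks. Both verifications are routine per value and I would simply assert that they have been performed, displaying the base blocks so the reader can confirm them.

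The substantive content, and the main obstacle, is establishing the defining-set bound $d\geq\frac{1}{2}$ for each design. By Proposition~1, a defining set must meet every directed trade contained in $\D$, and by the remark following it, if $\D$ contains a cyclical trade of volume $s$ then any defining set must contain at least $\lfloor\frac{s+1}{2}\rfloor$ blocks of $T_1$. The plan is therefore to partition (or nearly partition) the blocks of each $\D$ into a family of mutually disjoint cyclical trades whose positive blocks $T_1$ together exhaust essentially all of $\B$, and whose volumes $s$ satisfy $\lfloor\frac{s+1}{2}\rfloor\geq\frac{s}{2}$; summing these local lower bounds then yields that every defining set has at least $\frac{|\B|}{2}$ blocks, which is exactly $d\geq\frac{1}{2}$. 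Identifying these cyclical trades explicitly for each of the sixteen values is the delicate combinatorial step, since one must locate, inside each concrete design, enough volume-$2$ consecutive overlaps (as in the $v=13$ trade exhibited earlier) to chain the blocks into long cyclical trades rather than short isolated ones; short trades would give a weaker constant than $\frac{1}{2}$.

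I would organize the write-up as a case analysis over $v$, giving for each value the base blocks, the development group, and an explicit cyclical trade decomposition, and then invoke Proposition~1 and the cyclical-trade remark to conclude $d\geq\frac{1}{2}$ uniformly. Because the larger values in the list ($v=58,67,79,94,103$) have many blocks, I expect the trade-decomposition bookkeeping there to be the most laborious part, though conceptually identical to the small cases; where a design splits naturally into a cyclic part plus a few short-orbit blocks, I would handle the short orbits separately and absorb them into the count so that the global $\frac{1}{2}$ bound still holds.
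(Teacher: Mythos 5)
Your proposal is correct and takes essentially the same route as the paper: for each of the sixteen values the paper exhibits base blocks developed cyclically mod $v$, asserts the routine difference-based verification of the design and super-simplicity properties, and obtains $d\geq\frac{1}{2}$ by decomposing the blocks into mutually disjoint directed trades and summing the local bounds from Proposition~1 and the cyclical-trade remark. One correction to your reasoning: short trades are not a weakness --- a volume-$2$ trade already forces one of its two positive blocks into every defining set, which is exactly the constant $\frac{1}{2}$, and in fact the paper's decompositions consist mostly of $v$ disjoint volume-$2$ trades arising from pairs of base blocks, with longer cyclical trades (of volume $3$, $5$, or $v$, contributing $\lfloor\frac{s+1}{2}\rfloor\geq\frac{s}{2}$ blocks) needed only to absorb columns containing an odd number of base blocks, so your planned effort to chain blocks into long cyclical trades is unnecessary rather than essential.
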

\begin{proof}
$ v=10 $: The following base blocks by (+1 mod 10)  form a super-simple $(10,4,2)$DD.\\
 
\begin{center}
\begin{tabular}{|c|c|}
  \hline
  (0,1,2,6) & (2,0,5,8) \\
  (1,0,4,3) &  \\
  \hline
\end{tabular}
\end{center}
This design has 30 blocks, the first column has 10 disjoint directed trades of volume 2
and the last column is a cyclical trade of volume 10.
 Since each defining set for this super-simple directed design must contain at least one 4-tuple of each directed trades in first column and five 4-tuples of cyclical trade in second column, then
 each defining set must contain at least 10+5=15 blocks. So for this super-simple $(10,4,2)$DD, we have $d\geq \frac{1}{2}$.\\

$v=13$: The following base blocks by (+1 mod 13)  form a super-simple $(13,4,2)$DD.\\
\begin{center}
\begin{tabular}{|c|c|}
  \hline
  (0,1,11,5) & (0,1,6,4) \\
  (1,0,3,9) & (6,5,0,2) \\
  \hline
\end{tabular}
\end{center}
 This design has 52 blocks,
each  of  two  columns  has $13$ disjoint directed trades of volume 2. Since  each defining set for this super-simple directed design must contain at least one 4-tuple of each directed trades, then 
 each defining set must contain at least $13+13=26$ blocks. So for this super-simple $(13,4,2)$DD, we have  $d\geq \frac{1}{2}$.\\
 
$v=16$: The following base blocks by (+1 mod 16) form a super-simple $(16,4,2)$DD.\\
\begin{center}
\begin{tabular}{|c|c|c|}
  \hline
  (0,1,6,8) & (4,1,0,10) & (0,1,9,5) \\
  (1,0,11,14) & (12,0,3,14) & \\
  \hline
\end{tabular}
\end{center}
 There are  80 blocks in a super-simple $(16,4,2)$DD. The first  two columns have 32 disjoint directed trades of volume 2, 
and the last column is a cyclical trade of volume 16.
Since each defining set for this super-simple directed design must contain at least one 4-tuple of each directed trade in the first two columns and eight 4-tuples of cyclical trade in the last column, then 
 each defining set must contain at least $16+16+8=40$ blocks. Therefore for this super-simple$(16,4,2)$DD the inequality $d\geq \frac{1}{2}$ is satisfied.\\

$v=19$: The following base blocks by (+1 mod 19) form a super-simple $(19,4,2)$DD.\\
\begin{center}
\begin{tabular}{|c|c|c|}
  \hline
  (0,6,1,3) & (4,1,0,12) & (0,1,8,10) \\
  (15,7,1,0) & (5,0,3,10) & (2,0,6,15) \\
  \hline
\end{tabular}
\end{center}
This design has $114$ blocks,
each  of three columns of this design has 19 disjoint directed trades of volume 2. Since each defining set for this super-simple  $(19,4,2)$DD must contain at least one 4-tuple of each directed trades, then 
 each defining set must contain at least $19+19+19=57$ blocks. Therefore for this super-simple $(19,4,2)$DD the inequality $d\geq \frac{1}{2}$ is satisfied.\\

$v=22$: The following base blocks by (+1 mod 22) form a super-simple $(22,4,2)$DD.\\
\begin{center}
\begin{tabular}{|c|c|c|c|}
  \hline
  (0,3,1,6) & (1,5,7,0) & (15,1,10,0) & (0,14,2,10) \\
  (0,4,1,16) & (2,0,13,9) & (8,0,19,3) & \\
  \hline
\end{tabular}
\end{center}
This design contains 154 blocks, each of first three  columns  has 22 
disjoint directed trades of volume 2 and the last column 
is a cyclical trade of volume 22. Since each defining set for this design
 must contain one 4-tuple of  each directed trade in the first three columns and 11 4-tuples of cyclical trade in the last column, then each
 defining set contains at least $22\times3+11 = 77$ blocks. So $d\geq \frac{1}{2}$.\\

$v=25$: The following base blocks by (+1 mod 25) form a super-simple $(25,4,2)$DD.\\

\begin{center}
\begin{tabular}{|c|c|c|c|}
  \hline
  (0,1,18,3) & (0,11,1,7) & (13,6,1,0) & (2,0,16,12) \\
  (16,0,8,23) & (14,9,3,0) & (16,21,0,4) & (19,0,22,24)\\
  \hline
\end{tabular}
\end{center}
This design contains 200 blocks,
each of four columns  has 25 disjoint directed trades of volume 2. Since each
defining set for this design must contain one 4-tuple of each directed trade, then each
 defining set contains at least $ 25\times4 =100$ blocks. So the desired inequality follows.\\
 
 $v=28$: The following base blocks by (+1 mod 28) form a super-simple $ (28,4,2)$DD.

\begin{center}
\begin{tabular}{|c|c|c|c|c|}
  \hline
  (4,0,2,1) & (1,20,0,26) & (0,3,20,7) & (0,23,3,14) & (0,18,2,12) \\
  (15,0,1,5) & (19,0,6,13) & (17,0,10,5) & (19,11,7,0) & \\
  \hline
\end{tabular}
\end{center}
This design has 252 blocks, each of  the first four columns 
 has 28 disjoint directed trades of volume 2 and the last column is a cyclical trade of volume 28. Since each defining set for this design
 must contain one 4-tuple of  each directed trade in the first four columns and 14 4-tuples of cyclical trade in the last column, then each 
 defining set contains  at least $ 28\times4+14=126 $ blocks. So the desired inequality follows.\\

$v=31$: The following base blocks by (+1 mod 31) form a super-simple $ (31,4,2)$DD.

\begin{center}
\begin{tabular}{|c|c|c|c|c|}
  \hline
  (0,3,8,1) & (11,5,0,1) & (7,1,19,0) & (0,2,23,9) & (15,0,4,3) \\
  (3,0,18,13) & (3,0,11,17) & (0,2,15,6) & (26,2,0,11) &(0,27,19,10) \\
  \hline
\end{tabular}

\end{center}
This design contains 310 blocks,
each of the five columns has 31 disjoint directed trades of volume 2. Since each
defining set for this design must contain one 4-tuple of each directed trade, then each defining set 
 contains at least $ 31\times5=155 $ blocks. So $d\geq \frac{1}{2}$. \\

$v=34$: The following base blocks by (+1 mod 34) form a super-simple $ (34,4,2)$DD.

\begin{center}
\begin{tabular}{|c|c|c|}
  \hline
  (25,0,4,11) & (2,6,0,21) & (0,1,9,3) \\
  (2,12,9,0) & (7,3,29,0) & (1,7,19,0)\\
  (20,0,2,5) &    &                 \\
  (14,0,1,24) & (21,0,11,29) & \\
  (5,1,0,17) & (28,20,11,0) & \\
  \hline
\end{tabular}

\end{center}
This design has 374 blocks, the first column contains 34 cyclical trades of volume 5 and the last two columns contain
 102 disjoint directed trades of volume 2. Since each 
defining set for this design must contain $\frac{5+1}{2}\times 34$ 4-tuples of cyclical trade and one 4-tuple of each directed trade  of volume 2, then
 each defining set contains at least $ 102+102=204 $ blocks. So the desired inequality follows.\\

$v=40$: The following base blocks by (+1 mod 40) form a super-simple $ (40,4,2)$DD.

\begin{center}
\begin{tabular}{|c|c|c|c|}
  \hline
  (4,1,0,2) & (0,4,11,32) & (0,16,7,29) & (26,20,0,5) \\
  (0,3,18,23) & (5,0,33,17) & (32,1,0,38) &               \\ 
    &               &                   &                 \\     
  (0,12,3,33) & (17,4,0,30) & (0,6,1,15) &                 \\
  (23,0,10,2) & (18,0,4,29) &  (22,30,6,0) &                 \\
  \hline
\end{tabular}

\end{center}

This design has 520 blocks, contains 240 disjoint directed trades of volume 2 in the first three columns and 
 a cyclical trade in the last column. 
Since each defining set for this design must contain one 4-tuple of each directed trades in the first three columns and 20 4-tuples of the cyclical trade, then each
 defining set contains at least $ 240+20=260$ blocks. So $d\geq \frac{1}{2}$.\\

$v=43$: The following base blocks by (+1 mod 43) form a super-simple $ (43,4,2)$DD.

\begin{center}
\begin{tabular}{|c|c|c|c|}
  \hline
  (0,1,8,3) & (0,1,4,10) & (1,19,0,12) & (12,0,42,29) \\
  (0,5,20,27) & (11,22,3,0) & (0,10,35,26) & (4,16,0,29)\\
 &                       &                           &                     \\
  (0,4,2,23) & (2,17,0,8) & (4,20,0,9) &                   \\
  (10,24,3,0) & (5,0,33,18) & (12,6,26,0) &                \\
  \hline
\end{tabular}

\end{center}
This design has 602 blocks, contains 301 disjoint directed trades of volume 2. Since each 
defining set for this design must contain one 4-tuple of each directed trades, then each
 defining set contains at least $301 $ blocks. So $d\geq \frac{1}{2}$.\\

$v=58$: The following base blocks by (+1 mod 58) form a super-simple $ (58,4,2)$DD.

\begin{center}
\begin{tabular}{|c|c|c|c|}
  \hline
  (0,1,56,3) & (1,31,0,27) & (0,35,2,24) & (0,12,3,16) \\
  (0,1,14,9) & (1,32,0,52) & (5,0,23,15) & (40,15,3,0) \\
 &                       &                           &                     \\
  (0,4,37,19) & (13,22,6,0) & (0,17,34,5) &  (15,26,0,7)  \\
  (30,13,4,0) & (20,34,0,6) & (27,17,0,7) &   (24,36,0,47) \\              
  &                       &                           &                     \\
 (22,38,0,8) & (0,25,6,35) &                   &    \\
 (0,56,40,19) &                 &                      &   \\
  \hline
\end{tabular}

\end{center}
This design has 1102 blocks, contains 522 disjoint directed trades of volume 2 and 
 a cyclical trade of volume 58. Then each
 defining set contains at least $522+29=551$ blocks. So the desired inequality follows.\\

$v=67$: The following base blocks by (+1 mod 67) form a super-simple $ (67,4,2)$DD.

\begin{center}
\begin{tabular}{|c|c|c|c|}
  \hline
  (1,10,0,16) & (7,0,33,30) & (42,0,3,7) & (14,38,0,2) \\
  (33,24,0,8) & (0,10,17,28) & (6,2,0,19) & (45,2,0,21) \\
 &                       &                           &                     \\
  (5,13,0,50) & (0,14,6,47) & (23,7,0,49) &  (9,19,31,0)  \\
  (5,8,0,2) & (44,29,14,0) & (0,38,54,5) &   (12,0,25,39) \\              
  &                       &                           &                     \\
 (1,0,30,12) & (20,0,9,40) & (21,0,4,36)       &    \\
 (11,0,1,46) & (0,20,41,1)  & (27,32,4,0)       &   \\
  \hline
\end{tabular}

\end{center}
This design has 1474 blocks, contains 737 disjoint directed trades of volume 2. Then each 
defining set for this design  contains at least  $ 737 $ blocks. So $d\geq \frac{1}{2}$. \\

$v=79$: The following base blocks by (+1 mod 79) form a super-simple $ (79,4,2)$DD.

\begin{center}
\begin{tabular}{|c|c|c|c|c|}
  \hline
  (17,0,37,1) & (0,1,22,15) & (0,31,2,23) & (1,33,0,9) & (35,0,19,2) \\
  (13,30,0,4) & (1,0,30,12) & (51,30,0,10) & (5,0,12,45) & (0,25,3,38)\\ 
  &                       &                           &                 &    \\
  (3,0,30,45) & (39,0,4,29) & (19,27,4,0) & (15,0,5,39) &
  (37,0,6,26) \\
  (43,0,3,31) & (4,0,51,37) & (27,0,13,5) & (11,6,28,0) &
  (6,0,54,16)\\ 
 &                       &                           &                     & \\
  (0,18,27,6) & (3,26,44,0) & (0,50,7,26) &      &
  \\
  (20,2,0,34) & (9,25,2,0) & (0,43,24,11) &      &          \\
  \hline
\end{tabular}

\end{center}

This design has 2054 blocks, contains 1027 disjoint directed trades of volume 2. Then each 
defining set for this design  contains at least  $ 1027 $ blocks.  So $d\geq \frac{1}{2}$. \\

$v=94$: The following base blocks by (+1 mod 94) form a super-simple $ (94,4,2)$DD.

\begin{center}
\begin{tabular}{|c|c|c|c|c|}
  \hline
  (1,23,0,40) & (2,0,20,43) & (20,9,0,82) & (0,79,88,49) & (8,0,61,29) \\
  (0,5,28,42) & (13,87,6,0) & (0,12,26,71) & (0,15,4,45) &
  (0,32,8,44)\\
  &                       &                           &                    &  \\
  (47,19,0,3) & (0,13,32,57) & (26,34,0,7) & (17,44,4,0) & (5,43,0,25) \\
  (18,46,0,4) & (0,69,91,52) & (16,1,0,34) & (3,56,29,0) &
  (0,56,2,31)\\ 
  &                       &                           &                    & \\
  (29,0,5,59) & (61,0,10,1) & (59,0,11,1) & (73,0,16,6) & (0,6,17,48) \\
  (36,5,0,51) & (2,70,37,0) & (12,4,84,0) & (0,58,24,3) &(38,7,0,16) \\ 
  &                       &                           &                  &   \\
  (30,0,49,2)  &     &    &       &      \\
  \hline
\end{tabular}
\end{center}
This design has 2914 blocks, contains 1410 disjoint directed trades of volume 2 and 
 a cyclical trade of volume 94. So each
defining set for this design  contains at least  $1410+47=1457 $ blocks. So the desired inequality follows. \\

$v=103$: The following base blocks by (+1 mod 103) form a super-simple $ (103,4,2)$DD.

\begin{center}
\begin{tabular}{|c|c|c|c|c|}
  \hline
  (0,1,15,31) & (21,1,0,50) & (51,0,2,25) & (34,53,2,0) & (12,39,0,3) \\
  (0,89,97,73) & (6,50,0,17) & (5,0,51,15) & (2,65,19,0) &
  (20,42,3,0)\\
  &                       &                           &                    &  \\
  (45,22,0,4) & (19,4,0,47) & (0,3,51,24) & (29,0,5,62) & (0,2,37,73) \\
  (0,6,29,38) & (0,4,31,65) & (49,1,0,46) & (5,0,53,40) &
  (70,43,8,0)\\ 
  &                       &                           &                    & \\
  (69,0,5,26) & (19,12,59,31) & (11,37,23,0) & (0,1,93,8) & (0,16,72,44) \\
  (40,0,82,9) & (45,0,7,20) & (8,0,18,32) & (92,0,77,59) &(10,47,0,69) \\ 
  &                       &                           &                  &   \\
  (64,4,29,0)  & (12,48,61,0)    &    &       &      \\
  (0,66,41,86) & (22,13,0,6)   &             &             &       \\    
  \hline
\end{tabular}
\end{center}
This design has 3502 blocks, contains 1751 disjoint directed trades of volume 2. So each defining set of this design contains at least 1751 blocks. Then the inequality $d\geq \frac{1}{2}$ is satisfied. Then the proof is complete.
\end{proof}
\section{Super-simple directed group divisible designs with block
size $4$ and index $2$}
In this section, we construct some super-simple DGDDs with $d\geq \frac{1}{2}$,  that we need them in our main result.
\begin{lemma}\label{4}
There exists a super-simple $(4,2)$-DGDD of type $ 3^t $  for  $ t \in \lbrace 6,7,8,9,$\\$13 \rbrace $ with $d\geq \frac{1}{2}$.
\end{lemma}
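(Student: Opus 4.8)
The goal is to construct a super-simple $(4,2)$-DGDD of type $3^t$ for each $t\in\{6,7,8,9,13\}$, together with a verification that the parameter $d\geq\frac12$. Since these are small fixed orders, the natural plan is a direct construction: exhibit base blocks under a suitable cyclic (or rotational) action and verify the required properties, exactly as was done for the directed designs in Lemma \ref{3}. The plan is to place the $3t$ points as $Z_{3t}$ (or $Z_t\times Z_3$) with groups taken to be the residue classes modulo $t$, i.e. groups of the form $\{i,i+t,i+2t\}$, so that the natural action $(+1 \bmod 3t)$ fixes the group partition setwise. One then lists a set of ordered base quadruples whose orbits under $(+1\bmod 3t)$ cover every ordered pair from distinct groups exactly twice.

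First I would fix the point set and group structure as above and determine the number of base blocks needed: a $(4,2)$-DGDD of type $3^t$ has $2\cdot\binom{t}{2}\cdot 9/\binom{4}{2}$ unordered blocks in its underlying GDD, hence the directed design has twice that many blocks, and dividing by the orbit length $3t$ gives the number of base blocks to find. Next, for each $t$ I would search for base quadruples (ordered) so that the multiset of pair-differences realized, together with the within-block order, covers each ordered inter-group pair with multiplicity exactly $2$ while avoiding any pair coming from within a single group. The super-simplicity requirement is the constraint that in the underlying $(4,4)$-GDD any two blocks meet in at most two points; for directed designs this is exactly the condition that no unordered pair is repeated within a block orbit in a way that forces a triple intersection, so I would check that each unordered pair appears in at most the allowed number of blocks and that no two blocks share three points.

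Having produced the base blocks, the parameter $d\geq\frac12$ is then argued by the same trade-counting method used throughout Lemma \ref{3} and justified by Proposition \ref{1}: I would organize the base blocks into columns so that each column, under the cyclic action, either yields $t$ (or $3t$, depending on orbit length) mutually disjoint directed trades of volume $2$ — obtained by reversing the two middle coordinates of a block to get its negative partner — or forms a cyclical trade whose volume is the orbit length. Each disjoint volume-$2$ trade forces at least one block into every defining set, and each cyclical trade of volume $s$ forces at least $\lfloor (s+1)/2\rfloor$ blocks, as recalled before Proposition \ref{1}. Summing these lower bounds and comparing with the total block count $2\binom{t}{2}\cdot 3$ should give at least half, establishing $d\geq\frac12$.

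The main obstacle is the existence search itself: finding, for each of the five values of $t$, an explicit set of base quadruples that simultaneously (i) covers every ordered inter-group pair exactly twice, (ii) omits all intra-group pairs, (iii) is super-simple, and (iv) decomposes into disjoint volume-$2$ trades and cyclical trades whose lower bounds total at least half the blocks. Conditions (i)–(iii) are a constrained difference-family problem that I expect to require a short computer search for each $t$; the value $t=13$ is likely the most delicate since it is the largest and there is more freedom (and more pairs to cover) to manage. Once base blocks are in hand, verifying (iv) and tallying the defining-set bound is routine bookkeeping, so I would present the solution as five tables of base blocks with a brief trade-count justification accompanying each, mirroring the format of Lemma \ref{3}.
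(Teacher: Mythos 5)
Your plan is essentially the paper's proof: it takes $X=Z_{3t}$ with groups $\{0,t,2t\}+i$, exhibits explicit base blocks developed by $+1 \pmod{3t}$ for each $t\in\{6,7,8,9,13\}$, and bounds every defining set by splitting the columns into disjoint directed trades of volume $2$ plus cyclical trades, summing to at least half of the $3t(t-1)$ blocks. The only substance your proposal leaves open is the explicit base-block tables (which the paper supplies), and one small bookkeeping difference: the paper's cyclical trades in this lemma have volume $3$, each forcing $\lfloor (3+1)/2\rfloor=2$ blocks, rather than volume equal to the orbit length as in some cases of Lemma \ref{3}.
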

\begin{proof}
Let the point set be $ X=Z_{3t} $ and the group set be \\
 $\G = \lbrace\lbrace 0,t,2t \rbrace +i \,  \vert \,  \ 0 \leq i \leq t-1 \rbrace$. 
The base blocks are listed below. All the bellow base blocks are developed by mod 3t.\\ 
\begin{center}
$t=6$:
\begin{tabular}{|c|c|}
  \hline
  (2,0,5,9) & (0,1,2,16) \\
  (7,10,0,2) & (11,4,1,0) \\
  (1,5,0,10) &  \\
  \hline
\end{tabular}

\end{center}

This super-simple DGDD has 90 blocks, in the first column  
it contains 18 cyclical trades of 
volume 3 and in the second column 18 disjoint directed trades of volume 2. 
Then each defining set for this super-simple DGDD  contains at least  $ \frac{3+1}{2}\times 18+18=54 $ blocks.  So $d\geq \frac{1}{2}$.\\

\begin{center}
$t=7$:
\begin{tabular}{|c|c|c|}
  \hline
  (0,5,1,13) & (0,11,17,19) & (0,1,4,6) \\
  (11,10,1,5) & (12,11,0,19) & (7,12,4,1) \\
  \hline
\end{tabular}

\end{center} 

This super-simple DGDD has 126 blocks, each of  three columns 
has 21 disjoint directed trades of volume 2. Therefore
each defining set for this super-simple DGDD  contains at least  $ 21\times3=63 $ blocks. So $d\geq \frac{1}{2}$.\\

\begin{center}
$t=8$:  
\begin{tabular}{|c|c|c|}
  \hline
  (0,1,13,6) & (0,4,10,15) & (12,7,0,2) \\
  (2,1,0,4) & (21,0,4,18) & (2,0,9,15) \\
  (1,2,11,22) &   &   \\
  \hline
\end{tabular}

\end{center} 
This super-simple DGDD contains 168 blocks, the first column 
contains 24 cyclical trades of volume 3 and each of two other  columns has 24 
disjoint directed trades of volume 2. Then each
 defining set for this super-simple DGDD  contains at least  $ \frac{3+1}{2}\times 24+2\times 24=96 $ blocks. So the desired inequality follows.\\

\begin{center}
$t=9$:
\begin{tabular}{|c|c|c|}
  \hline
  (0,6,1,13) & (1,0,2,5) & (0,6,2,17) \\
  (3,13,1,23) & (3,11,0,24) & (0,14,8,4) \\
  (17,1,6,4) & (1,8,0,20) &    \\
  \hline
\end{tabular}

\end{center} 
This super-simple DGDD has 216 blocks, 
each of two first column  contains 27  cyclical trades of volume 3 and the last column  contains 27  disjoint directed trades of volume 2.
So each defining set for this super-simple DGDD  contains 
at least $ 2(\frac{3+1}{2}\times 27)+27=135 $ blocks. So the desired inequality follows.\\

$t=13$:

\begin{flushleft}

\begin{tabular}{|c|c|c|c|c|}
\hline ( 3,0,12,21 )  & ( 2,1,0,4 )  &( 8,0,2,19 )   & ( 0,16,22,5 )  & ( 5,0,11,1 )  \\ 
 ( 19,0,3,35 ) & ( 4,0,32,18 )  & ( 2,0,29,14 )  &( 0,17,1,8 )   & ( 0,5,15,36 )  \\ 
 (5,12,0,20) & ( 14,0,33,24 )  &  &  &  \\ 
\hline 
\end{tabular}
\end{flushleft} 

This super-simple DGDD has 468 blocks,  each of two first  columns of
this super-simple DGDD contains 39 cyclical trades of 
volume 3 and each of the next three columns has 39 disjoint directed trades of volume 2.
So each defining set for this super-simple DGDD has  at least $ 2(\frac{3+1}{2}\times 39)+3\times 39=273$ blocks. Therefore $d\geq \frac{1}{2}$. 
\end{proof}
\begin{lemma}\label{5}
There exists a super-simple $(4,2)$-DGDD of type $ t^4 $  for
  $ t \in \lbrace 4,5,6,13,\\19,22 \rbrace $ with $d\geq \frac{1}{2}$.
\end{lemma}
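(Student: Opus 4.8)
The plan is to prove the lemma by a direct cyclic construction, in exact parallel with the proofs of Lemmas~\ref{3} and~\ref{4}. For each $t \in \{4,5,6,13,19,22\}$ I would take the point set $X = Z_{4t}$ and let the four groups be the residue classes modulo $4$, namely $G_j = \{x \in Z_{4t} : x \equiv j \pmod 4\}$ for $j = 0,1,2,3$, each of which has $t$ elements; this realizes the type $t^4$. The design is then generated from a short list of base blocks developed by $+1 \pmod{4t}$. A $(4,2)$-DGDD of type $t^4$ has exactly $4t^2$ blocks and each orbit under the $+1$ action has length $4t$, so precisely $t$ base blocks are needed, each of which must be a transversal of the four groups, i.e.\ its four entries are pairwise incongruent modulo $4$.

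Once candidate base blocks are written down, two properties must be verified. First, that the developed collection is a $(4,2)$-DGDD: treating each ordered block $(a,b,c,d)$ as covering the six ordered pairs in which one entry precedes another, the multiset of directed differences coming from the base blocks must cover every residue of $Z_{4t}$ that is nonzero modulo $4$ (the mixed differences) exactly twice, while contributing nothing to the differences $\equiv 0 \pmod 4$ (the within-group differences). Second, super-simplicity: the underlying $(4,4)$-GDD must have any two blocks meeting in at most two points, which I would confirm by the standard difference argument on unordered pairs. For $t \in \{4,5,6\}$ both checks are routine by hand; for $t \in \{13,19,22\}$ they reduce to a finite search and are most efficiently confirmed by computer.

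The defining-set bound then follows from Proposition~1 together with the cyclical-trade estimate recalled in the introduction. I would arrange the $t$ base blocks into columns so that each column is either a pair of base blocks sharing a transposition in two fixed coordinates, whose orbits furnish $4t$ mutually disjoint directed trades of volume $2$, or a cyclically linked triple of base blocks whose orbits furnish $4t$ cyclical trades of volume $3$. By Proposition~1 a defining set must meet the positive part of every such trade: a volume-$2$ column forces at least one block from each of its $4t$ trades, while a cyclical column of volume $3$ forces at least $\lfloor (3+1)/2\rfloor = 2$ blocks from each of its $4t$ trades. When $t$ is even the base blocks split into $t/2$ volume-$2$ columns, giving the bound $(t/2)(4t) = 2t^2 = \frac{1}{2}(4t^2)$ exactly; when $t$ is odd one uses $(t-3)/2$ volume-$2$ columns and one cyclical column, giving the bound $\frac{t-3}{2}\cdot 4t + 2\cdot 4t = 2t^2 + 2t$, which exceeds $2t^2$. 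In every case the smallest defining set has at least $2t^2$ blocks, so $d \geq \frac{1}{2}$.

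The main obstacle is the first step: exhibiting base blocks that are simultaneously a super-simple $(4,2)$-DGDD and that organize into trade columns reaching the $\frac{1}{2}$ threshold. The two demands conflict, since super-simplicity forbids the base blocks from overlapping too much, whereas the trade decomposition wants pairs (or short cycles) of base blocks agreeing up to a single transposition in two coordinates so as to generate volume-$2$ (or small cyclical) trades. For $t = 4,5,6$ this balance is easy to strike explicitly, but for $t = 13,19,22$ the search space is large and locating an admissible list is the genuinely difficult, computational part of the argument; the subsequent verification of the listed blocks and their trade structure is mechanical.
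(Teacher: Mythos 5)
Your framework coincides with the paper's proof in every structural respect: the paper also takes $X=Z_{4t}$ with the four groups being the residue classes modulo $4$ (its $\G=\{\{0,4,\ldots,4(t-1)\}+i \mid 0\leq i\leq 3\}$), develops base blocks by $+1 \pmod{4t}$, and obtains $d\geq\frac{1}{2}$ by arranging the base blocks into columns that generate disjoint directed trades of volume $2$ (one block forced per trade), cyclical trades of volume $3$ (forcing $\frac{3+1}{2}=2$ blocks per trade), and, in the cases $t=13,19,22$, single base blocks whose full orbit is a cyclical trade of volume $4t$ (forcing $2t$ blocks). Your verification conditions (each mixed difference $\not\equiv 0 \pmod 4$ covered exactly twice as an ordered difference, no pure differences, super-simplicity of the underlying $(4,4)$-GDD checked on unordered differences) and your orbit count of exactly $t$ base blocks are the correct ones; indeed your bookkeeping is in places more careful than the paper's printed data, whose lists for $t=13$ (ten base blocks, $520$ blocks claimed, versus the $4t^2=676$ required) and $t=19$ (twenty base blocks, with $(0,11,1,26)$ appearing twice) do not match the count $t$ and appear to contain misprints.

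Nevertheless, as a proof of the lemma your proposal has a genuine gap: the lemma is an existence statement, and the paper's entire proof consists of exhibiting explicit base blocks for each $t\in\{4,5,6,13,19,22\}$ together with the trade count. You never produce these blocks, and you yourself flag the search for $t=13,19,22$ as the ``genuinely difficult, computational part''; everything you do supply is scaffolding around data that is missing, so the existence claim is not actually established. A secondary, harmless discrepancy: your parity-based column scheme ($t/2$ volume-$2$ columns for even $t$; $(t-3)/2$ such columns plus one volume-$3$ column for odd $t$) is not what the paper does --- for $t=6$ it uses two volume-$3$ cyclical columns, and for $t=13,19,22$ it mixes volume-$3$ columns, volume-$2$ columns, and full-orbit cyclical trades of volume $4t$. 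Since any arrangement forcing at least $2t^2$ blocks yields $d\geq\frac{1}{2}$, this choice is immaterial, but it underlines that the counting can only be carried out once the actual lists are in hand.
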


\begin{proof}
Let $ X=Z_{4t} $, $ \G=( \{ 0,4,...,4(t-1)  \}+i \, \vert \,   \ 0 \leq i \leq 3 \} $ 
and the base blocks are listed below, all the bellow base blocks are developed by mod 4t.\\

\begin{center}
$t=4$:
\begin{tabular}{|c|c|}
  \hline
  (0,1,3,10) & (0,5,2,11) \\
  (2,0,3,13) & (0,15,14,5) \\
  \hline
\end{tabular}

\end{center}

This super-simple DGDD has 64 blocks, each of  two columns 
has 16 disjoint directed trades of volume 2.
So each defining set for this super-simple DGDD must contain at least  $16\times2=32 $ blocks. Then the desired inequality follows.\\

\begin{center}
$t=5$:
\begin{tabular}{|c|c|}
  \hline
  (1,0,10,3) & (0,7,2,5) \\
  (0,1,18,11) & (3,0,14,9) \\
  (1,6,0,7) &   \\
  \hline
\end{tabular}

\end{center}
This super-simple DGDD has 100 blocks, the first column of
this super-simple DGDD contains  20 cyclical trades of 
volume 3 and the second column has 20 disjoint directed trades of volume 2, 
so each defining set contains at least $ \frac{3+1}{2}\times 20+20=60 $  blocks. So the desired inequality follows.\\

\begin{center}
$t=6$:
\begin{tabular}{|c|c|}
  \hline
  (0,1,2,7) & (2,0,5,19) \\
  (1,0,14,11) & (0,2,15,21) \\
  (14,0,23,17) & (0,15,22,9) \\
  \hline
\end{tabular}

\end{center}
This super-simple DGDD has 144  blocks, each of  two columns of
this super-simple DGDD contains 24 cyclical trades of 
volume 3, so each defining set for this super-simple DGDD 
 contains at least $2( \frac{3+1}{2}\times 24)=96$ blocks. So the desired inequality follows.\\

\begin{flushleft}
$ t=13 $:
\begin{tabular}{|c|c|c|c|c|}
  \hline
  (0,10,15,1) & (1,7,0,2) & (3,0,25,6) & (0,5,39,26) & (1,0,18,27) \\
  (0,29,19,2) & (0,13,7,30) & (7,10,0,21) &   &   \\
  (0,29,43,18) & (5,15,22,0) &   &  &  \\
  \hline
\end{tabular}

\end{flushleft}
This super-simple DGDD has 520 blocks, each of   two first columns  of
this super-simple DGDD contains 52 cyclical trades of 
volume 3, the third column has 52 disjoint directed trades of volume 2,
and  two last  columns are cyclical trades of volume 52. 
So each defining set for this super-simple DGDD  contains  at least $ 2(\frac{3+1}{2}\times 52)+52+2 \times 26=312 $ blocks. Then the inequality $d \geq \frac{1}{2}$ follows.\\

\begin{flushleft}
$ t=19 $:
\begin{tabular}{|c|c|c|c|c|}
  \hline
  (30,0,3,9) & (22,0,35,1) & (22,11,0,29) & (0,47,33,70) & (6,25,0,51)  \\
  (0,2,13,43) & (5,15,0,22) & (0,11,1,26) & (9,26,55,0) & (0,57,15,34) \\
  (2,0,33,7) & (0,11,1,26) & (7,17,54,0) & ( 5,0,14,39 )   &  \\
        &        &           &       &    (10,1,0,3)  \\
(1,19,0,6) & (3,33,0,62) & (2,23,0,37) & (13,0,31,58) &  (18,3,0,41) \\
  \hline
\end{tabular}

\end{flushleft}
This super-simple DGDD has 1520 blocks, each of four first columns contains 76 cyclical trades of volume 3 and one cyclical trade of volume 76, and the last column has 152 disjoint directed trades of volume 2. 
So each defining set for this super-simple DGDD  contains  at least $ 4(\frac{3+1}{2} \times 76)+4 \times 38+152=912$ blocks. Then the inequality $d \geq \frac{1}{2}$ follows.\\

$ t=22 $:
\begin{flushleft}

\begin{tabular}{|c|c|c|c|c|}
  \hline
 (0,6,19,1)  &(2,27,5,0)   & (2,39,0,25)  & (35,0,2,29)  & (0,30,7,69)  \\ 
  (6,0,15,33) &(39,0,5,26)   & (3,18,37,0)  & (12,27,0,10)  & (0,13,39,46)  \\ 
  (15,0,62,37)  & (25,0,14,3)  & (9,19,0,54)  &(10,0,3,41)   & (7,0,65,50)  \\ 
    &  &  &  &  \\ 
 (0,57,74,79)  & (42,0,11,53)  &(38,1,0,67)  & (1,0,43,2)  &  \\ 
(17,0,35,58)   &  (21,10,0,55) &(0,29,38,59)  &  &  \\ 
\hline 
\end{tabular} 

\end{flushleft}
This super-simple DGDD has 1936 blocks,  
each of three first  columns contains 88 cyclical trades of volume 3 and 88 disjoint directed trades of volume 2, the fourth column contains 88 cyclical trades of volume 3 and a cyclical trade of volume  88 and the last column contains 88 cyclical trades of volume 3. 
So each defining set for this super-simple DGDD  contains  at least $ 5(\frac{3+1}{2}\times 88)+3\times 88+44=1188 $ blocks. Then the desired inequality follows.
\end{proof}
\begin{lemma}\label{6}
There exists a super-simple $(4,2)$-DGDD of type $ 9^t $  for  
$ t \in \lbrace 4,5 \rbrace $ with $d\geq \frac{1}{2}$.
\end{lemma}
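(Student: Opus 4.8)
The plan is to proceed by an explicit direct construction, exactly parallel to Lemmas~\ref{4} and~\ref{5}. For each $t\in\{4,5\}$ I would take the point set $X=Z_{9t}$ and let the group set be the collection of cosets of the subgroup $\langle t\rangle=\{0,t,2t,\dots,8t\}$, that is $\G=\{\{0,t,2t,\dots,8t\}+i \,:\, 0\le i\le t-1\}$, which gives $t$ groups of size $9$. Since $x\mapsto x+1$ permutes these cosets cyclically, the whole design will be generated from a short list of base blocks developed by $(+1 \bmod 9t)$. A block count shows that a super-simple $(4,2)$-DGDD of type $9^t$ has $27t(t-1)$ blocks, namely $324$ blocks for $t=4$ and $540$ for $t=5$; if every base-block orbit is full (of length $9t$) this corresponds to $9$ base blocks for $t=4$ and $12$ for $t=5$. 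For $t=4$ it is natural to take every block to be a transversal of the four groups, so that the design is a directed, index-$2$ analogue of a transversal design on $Z_{36}$; for $t=5$ each block can meet only four of the five groups, and I would again require that no two points of a block lie in a common group.

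The core of the verification is the difference method. Each ordered quadruple carries six ordered ``forward'' pairs, so the directed index-$2$ condition becomes the purely arithmetic requirement that each residue $d\not\equiv 0 \pmod t$ (there are $9(t-1)$ of them) occur exactly twice as a forward difference $y-x$ among the ordered pairs of the base blocks; the totals match, since $6\times 9=54=2\times 27$ for $t=4$ and $6\times 12=72=2\times 36$ for $t=5$. Differences that are multiples of $t$ must be avoided entirely, as they fall inside the groups. Super-simplicity is the additional constraint that no two developed blocks share three points; in difference language this rules out any repeated triple of intra-block differences across the orbits, and I would check it by verifying, for each pair of base blocks and each translate, that the intersection has size at most $2$ (equivalently, by a stabiliser/difference analysis of the orbits).

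For the defining-set bound I would decompose all blocks into directed trades and invoke Proposition~1 together with the cyclical-trade estimate $\lfloor\frac{s+1}{2}\rfloor$. Concretely, I would arrange the base-block orbits so that they fall into the three recognisable families seen in Lemmas~\ref{4} and~\ref{5}: pairs of orbits forming disjoint directed trades of volume $2$ (each forcing one block out of two), orbits forming short cyclical trades of volume $3$ (each forcing $2$ out of $3$ blocks), and, where needed, a single full-length cyclical trade of volume $9t$ (forcing $\lfloor\frac{9t+1}{2}\rfloor$ blocks). Since every one of these types forces at least half of its blocks into any defining set, summing over a decomposition of all $27t(t-1)$ blocks yields a lower bound of at least half, giving $d\ge\frac12$.

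The main obstacle is the first step: producing explicit base blocks that meet all three requirements at once, namely exact twofold coverage of every live difference, super-simplicity (no three-point intersection among the developed blocks), and a clean trade decomposition whose defining-set count reaches one half. Because the groups have size $9$ there are many more live differences to balance than in the type $3^t$ and small type $t^4$ cases of Lemmas~\ref{4} and~\ref{5}, and the super-simplicity and trade constraints interact, so I expect the base blocks to be found by a computer search over $Z_{36}$ and $Z_{45}$ rather than by a uniform closed-form pattern; once they are exhibited, the three verifications above are routine.
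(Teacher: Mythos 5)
Your framework coincides exactly with the paper's: the paper also takes $X=Z_{9t}$ with groups $\{0,t,2t,\dots,8t\}+i$ for $0\le i\le t-1$, develops a short list of base blocks by $+1 \bmod 9t$, and obtains $d\ge\frac{1}{2}$ by splitting the orbits into columns forming disjoint directed trades of volume $2$ and cyclical trades of volume $3$ (for $t=5$ it uses two columns each containing $45$ cyclical trades of volume $3$ plus three columns each containing $45$ volume-$2$ trades, forcing $2(\frac{3+1}{2}\cdot 45)+3\cdot 45=315$ of the $540$ blocks; for $t=4$ it uses only volume-$2$ trades). But the lemma is an existence statement, and the paper's proof consists precisely of exhibiting the explicit base blocks --- e.g.\ $(0,1,2,4)$, $(1,0,8,14)$, $(16,0,43,34)$, \dots\ for $t=5$ --- together with the routine checks you describe. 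Your proposal stops exactly at that point: you concede that the base blocks ``would be found by a computer search'' and exhibit none, so no design is actually constructed and the existence claim is not established. The difference-method criterion, the count of live differences, and the trade bookkeeping are all sound, but they are the easy half of the proof; without the blocks there is a genuine gap at the core.

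One further point is worth recording: your block count is correct and the paper's printed count is not. A $(4,2)$-DGDD of type $9^4$ must have $2\cdot 36\cdot 27/6=324$ blocks, i.e.\ $9$ full orbits, exactly as you computed via $27t(t-1)$; the paper lists only $8$ base blocks for $t=4$ and asserts $288$ blocks, which cannot cover every inter-group difference twice ($8$ base blocks supply $48$ forward differences against the required $2\cdot 27=54$), so the printed table for $t=4$ is evidently missing one orbit. (The $t=5$ table, with $12$ base blocks and $540$ blocks, matches your count.) This does not repair your proposal --- it still exhibits no construction --- but it does show that your verification plan, had you carried it out on explicit data, is sharp enough to catch such defects; note also that with $9$ orbits the $t=4$ defining-set bookkeeping must be adjusted, since the paper's pairing of $8$ orbits into four columns of volume-$2$ trades leaves the ninth orbit to be handled, say, as a cyclical trade of volume $36$ forcing $18$ of its blocks, giving at least $4\cdot 36+18=162$ of $324$ blocks and preserving $d\ge\frac{1}{2}$.
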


\begin{proof}
Let the point set be $ X=Z_{9t} $ and the group set be \\ 
$\G = \lbrace\lbrace 0,t,2t,...,8t \rbrace +i \, \vert \,  \  0 \leq i \leq t-1 \rbrace$. 
The base blocks are listed below. All the bellow base blocks are developed by mod 9t.\\ 

\begin{center}
$ t=4 $:
\begin{tabular}{|c|c|c|c|}
  \hline
  (1,0,2,7) & (0,1,10,19) & (3,0,21,10) & (2,0,5,31) \\
  (3,14,0,17) & (1,0,23,14) & (9,0,15,2) & (10,0,31,25) \\
  \hline
\end{tabular}

\end{center}
This super-simple DGDD has 288 blocks, each of four columns of this DGDD
has 36 disjoint directed trades of volume 2.
So each defining set for this super-simple DGDD  contains  at least $ 36\times4=144 $ blocks. Therefore $d\geq \frac{1}{2}$.\\

\begin{center}
$ t=5 $:
\begin{tabular}{|c|c|c|c|c|}
  \hline
  (0,1,2,4) & (2,0,11,19) & (6,13,27,0) & (11,0,42,33) & (0,26,3,19) \\
  (1,0,8,14) & (4,0,37,13) & (1,7,0,24) & (0,16,27,39) &  (17,3,0,29)\\
  (16,0,43,34) & (0,4,41,28) &   &   &    \\
  \hline
\end{tabular}

\end{center}
This super-simple DGDD has 540 blocks, each of two first  columns of
this super-simple DGDD contains 45 cyclical trades of 
volume 3 and each of the next three columns has 45 disjoint directed trades of volume 2. 
So each defining set for this super-simple DGDD has  at least $ 2(\frac{3+1}{2}\times 45)+3\times 45=315$ blocks. Therefore $d\geq \frac{1}{2}$.
\end{proof}
 \begin{lemma}\label{8}
There exists a super-simple $(4,2)$-DGDD of type $ 6^{5} $ with $d\geq \frac{1}{2}$.
\end{lemma}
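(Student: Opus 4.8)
The plan is to give a direct, cyclic construction, exactly in the style of Lemmas \ref{4}--\ref{6}. I would take the point set to be $X=Z_{30}$ and the groups to be the cosets of $\langle 5\rangle$, that is $\G=\{\{i,i+5,i+10,i+15,i+20,i+25\}:0\le i\le 4\}$, so that two points lie in a common group precisely when their difference is a multiple of $5$ (the group set is permuted cyclically by $+1$, hence the development respects the group structure). Since a $(4,2)$-DGDD of type $6^5$ must cover each of the $720$ ordered pairs from distinct groups exactly twice, and each block accounts for $12$ ordered pairs, the design has $1440/12=120$ blocks; as no nonzero translation fixes an ordered block, every orbit under $+1\ (\mathrm{mod}\ 30)$ has full length $30$, so I need exactly $4$ base blocks developed mod $30$.

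First I would reduce the DGDD conditions to the difference method. Developing $4$ base blocks mod $30$ yields a $(4,2)$-DGDD of the required type if and only if the multiset of the $48$ ordered differences arising from the base blocks consists of each of the $24$ residues of $Z_{30}$ that are not multiples of $5$, each occurring exactly twice; this simultaneously forces index $2$ and prevents any intra-group pair from being covered, since a difference divisible by $5$ would cover a within-group pair. Equivalently, the $24$ unordered differences of the four blocks must realise each of the $12$ non-trivial difference classes exactly twice. Super-simplicity of the underlying index-$4$ GDD I would verify through the same bookkeeping, by checking that no two developed blocks share three points; in cyclic terms this rules out certain alignments among the pairs realising repeated differences, and it is a finite check once the base blocks are fixed.

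The combinatorial heart is to exhibit four base blocks meeting all of the above at once and to arrange them so that the defining-set bound is transparent. Guided by the $t=4$ case of Lemma \ref{5}, I would split the four base blocks into two columns of two, each column being a pair of blocks whose leading two coordinates carry opposite differences, of the shapes $(x,y,\ast,\ast)$ and, after a suitable shift, $(y,x,\ast,\ast)$. As in the explicit $2$-$(13,4,2)$DD trade of the introduction, swapping the first two coordinates of such a pair produces a directed trade of volume $2$, and developing mod $30$ gives $30$ mutually disjoint such trades per column, hence $2\times 30=60$ pairwise disjoint directed trades of volume $2$ in total. By Proposition 1 together with the disjoint-trades principle recalled after it, every defining set must contain at least one block from each of these $60$ trades, so it contains at least $60$ of the $120$ blocks; therefore $d\ge\tfrac12$.

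The main obstacle is the simultaneous satisfaction of the three requirements on the four base blocks: the exact twofold difference cover (for index $2$ and the group structure), super-simplicity, and the opposite-leading-difference pairing that makes the volume-$2$ trades visible. This is a constraint search best carried out by computer; once a valid quadruple is produced, every verification above is routine and finite, and the block and trade counts then deliver $d\ge\tfrac12$ at once.
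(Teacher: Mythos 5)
There is a genuine gap, and it occurs right at the start: you count $12$ ordered pairs per block, but a directed block covers only the ordered pairs that respect the order of the tuple. The block $(a,b,c,d)$ covers exactly the $\binom{4}{2}=6$ ordered pairs $(a,b),(a,c),(a,d),(b,c),(b,d),(c,d)$ --- not both orders of each pair (covering both orders is what the \emph{underlying} design does, which is precisely why a $(4,2)$-DGDD has underlying index $2\lambda=4$). So the correct block count is $1440/6=240$, not $120$, and developing mod $30$ requires $8$ base blocks, not $4$. The same factor-of-two error propagates into your difference-method reduction: four base blocks produce only $4\times 6=24$ ordered differences, not $48$, so they can never cover each of the $24$ cross-group residues twice; the constraint search you propose has no solution, and your construction fails at the outset rather than at a verifiable detail. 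Your defining-set arithmetic inherits the error as well: $60$ blocks out of $240$ is only a quarter.

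For comparison, the paper's proof does use $X=Z_{30}$ with the cosets of $\langle 5\rangle$ as groups, exactly as you set up, but with $8$ base blocks ($240$ blocks total), organized so that one column of $3$ base blocks yields $30$ cyclical trades of volume $3$ (each forcing $\lfloor\frac{3+1}{2}\rfloor=2$ blocks into any defining set), two columns of $2$ base blocks each yield $30$ disjoint directed trades of volume $2$, and one remaining base block whose full orbit is a single cyclical trade of volume $30$ (forcing $15$ blocks); this gives the bound $2\times 30+30+30+15=135\geq 120$, hence $d\geq\frac{1}{2}$. Your pairing idea is not unsalvageable --- with the corrected count you could try to arrange the $8$ base blocks into $4$ opposite-leading-difference pairs, yielding $4\times 30=120$ disjoint volume-$2$ trades and exactly the bound $d\geq\frac{1}{2}$ --- but that is a different (and still-to-be-realized) search problem from the one you posed, and the paper instead mixes cyclical trades of volumes $3$ and $30$ with volume-$2$ trades to get a bound with some slack.
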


\begin{proof}
Let $ X=Z_{30} $, $ \G=\{\{ 0,5,10,15,20,25 \}+i \, | \,  \  0 \leq i \leq 4 \} $, the set base blocks 
are listed below. They are developed by mod $ 30 $.

\begin{center}
\begin{tabular}{|c|c|c|c|}
\hline (7,9,0,1) & (19,1,12,0) & (1,2,4,0) & (3,9,0,17) \\ 
(3,0,7,16)  &(6,13,0,19) & (4,8,16,0) &  \\ 
 (2,0,18,21) &  &  &  \\  
\hline 
\end{tabular}

\end{center}
This super-simple DGDD has 240 blocks, the first column contains 30  cyclical trades of 
volume 3 and each of the second and third column has 30 disjoint directed trades of volume 2 and  
the last column is a cyclical trade of volume 30. 
So each defining set for this super-simple DGDD  contains  at least $\frac{3+1}{2}\times 30+ 2\times 30+15=135$ blocks. Then the desired inequality follows.
\end{proof}
\section{Proof of Main Theorem}
This section is devoted to find super-simple $(v,4,2)$DDs for some admissible  values of $v$ by  recursive constructions presented in Section 2 and using super-simple DGDDs obtained in  Section 4. First we present some lemmas which  are generalized form of the lemmas in \cite{chen1} to directed designs. Finally, we conclude the main result in the end of this section.

\begin{lemma}\label{11}
There exists a super-simple $(v,4,2)$DD for $ v \in \{ 37,46,52,76, 88 \} $ with $d\geq \frac{1}{2}$.
\end{lemma}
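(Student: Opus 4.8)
The plan is to build each of the five designs by the filling construction of Section~2 (Construction~2), starting from one of the super-simple directed group divisible designs of Section~4 and plugging the small directed designs of Lemma~\ref{3} into its groups. Recall that Construction~2 converts a super-simple $(4,2)$-DGDD of type $g_1^{u_1}\cdots g_t^{u_t}$ with $d\geq\frac{1}{2}$ and super-simple $(g_i+\eta,4,2)$DDs (with $\eta\in\{0,1\}$) into a super-simple $(\sum_i g_iu_i+\eta,4,2)$DD that again satisfies $d\geq\frac{1}{2}$. So the whole problem reduces to writing each target $v$ as $\sum_i g_iu_i+\eta$ in such a way that a DGDD of the matching type has already been produced and each fill-in size $g_i+\eta$ already appears in Lemma~\ref{3}.

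First I would dispose of the two values $\equiv 1$ (mod 9) using the type-$9^t$ DGDDs of Lemma~\ref{6} with $\eta=1$: since $37=9\cdot 4+1$ and $46=9\cdot 5+1$, I take a super-simple $9^4$ (resp.\ $9^5$) DGDD, adjoin one common new point, and fill each group together with that point by a super-simple $(10,4,2)$DD. For the remaining three values I would instead use the type-$t^4$ DGDDs of Lemma~\ref{5} with $\eta=0$: from $52=13\cdot 4$, $76=19\cdot 4$ and $88=22\cdot 4$ I fill the four groups of a $13^4$, $19^4$, resp.\ $22^4$ DGDD by super-simple $(13,4,2)$, $(19,4,2)$, resp.\ $(22,4,2)$DDs. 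In every case the input design required is one of those listed in Lemma~\ref{3}, and the congruences $9\equiv 0$ and $13,19,22\equiv 1$ (mod 3) guarantee that all the objects invoked are of admissible order.

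With the arithmetic matched, the existence of the five designs is immediate from Construction~2; the only part demanding attention is the assertion $d\geq\frac{1}{2}$. Here I would invoke the propagation statement built into Construction~2, namely that $d\geq\frac{1}{2}$ is inherited by the filled design whenever the master DGDD and every fill-in design satisfy it, which the ingredients from Lemmas~\ref{5}, \ref{6} and~\ref{3} all do by construction. The one place where this inheritance is not purely formal is the lower-bound count for a defining set of the assembled design: such a set must hit every directed trade of the DGDD as well as every trade sitting inside a filled hole, and one must check that these two families of trades are essentially disjoint so that the two block counts add rather than overlap. Because the cross-group trades of the DGDD and the within-group trades of the fill-in designs involve disjoint sets of ordered pairs, I expect the counts to add exactly as in Construction~2, yielding $d\geq\frac{1}{2}$ for all of $v\in\{37,46,52,76,88\}$, which completes the proof.
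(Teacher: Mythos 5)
Your proposal is correct and is essentially identical to the paper's own proof: both cases use exactly the same ingredients, namely the type-$9^4$ and $9^5$ DGDDs of Lemma~\ref{6} filled with $(10,4,2)$DDs (with $\eta=1$) for $v=37,46$, and the type-$13^4$, $19^4$, $22^4$ DGDDs of Lemma~\ref{5} filled with the corresponding designs of Lemma~\ref{3} (with $\eta=0$) for $v=52,76,88$, together with the $d\geq\frac{1}{2}$ inheritance built into Construction~2. Your extra remark verifying that the DGDD trades and the within-group trades are disjoint is a harmless elaboration of what the paper takes for granted in Construction~2.
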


\begin{proof}
By Lemma \ref{6}, we know that  there exists a super-simple $ (4,2)-$DGDD of group type $ 9^{t} $ for 
$ t \in \{4,5\} $. Since $ \vert G \vert +1=9+1=10$ for each group $ G $ of the super-simple DGDD,
by Lemma \ref{3} and Construction 2, we conclude that there exists a super-simple $ (v,4,2)$DD that $ v\in \{37,46\}$ 
with $d\geq \frac{1}{2}$.\\

By Lemma \ref{5}, there exists a super-simple $ (4,2)-$DGDD of group type $ t^{4} $ for $ t \in \{13,19,22\} $  with $d\geq \frac{1}{2}$.  
Since $ \vert G \vert =13,19 $ or $22$, for each group $ G $ of the super-simple DGDD,
by Lemma \ref{3} and Construction 2 it follows there exists a super-simple $(v,4,2)$DD for $ v\in \{52,76,88 \}$ with
$d\geq \frac{1}{2}$.
\end{proof}
\begin{lemma}
There exists a super-simple $(v,4,2)$DD with
$d\geq \frac{1}{2}$ for $v\in \{49, 61, 73\\, 97, 121\}$.
\end{lemma}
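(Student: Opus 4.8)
The plan is to produce each of the five values $v\in\{49,61,73,97,121\}$ as the output of Construction 2 (filling the groups of a super-simple DGDD with smaller ingredient designs, with $\eta=1$). The first observation is that each target satisfies $v\equiv 1\pmod 3$ and, more usefully, $v-1$ factors in a way compatible with a DGDD of type $t^4$ or $t^6$ whose group order plus one is already an available direct design. Explicitly, $49-1=48=12\cdot 4$, $61-1=60=15\cdot 4$, $73-1=72=18\cdot 4$, $97-1=96=24\cdot 4$, and $121-1=120=30\cdot 4$, so I would first try to realize each of these as a super-simple $(4,2)$-DGDD of type $t^4$ with $t\in\{12,15,18,24,30\}$, fill each group (of size $t$) with a super-simple $(t+1,4,2)$DD, and adjoin the single extra point via $\eta=1$.

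First I would secure the ingredient direct designs: by Lemma~\ref{3} I already have super-simple $(t+1,4,2)$DDs with $d\ge\frac12$ for $t+1\in\{13,16,19,25,31\}$, i.e. for $t\in\{12,15,18,24,30\}$, which is exactly the list above. So the direct ingredients are in hand and satisfy $d\ge\tfrac12$. Second, I must supply the master DGDDs of type $t^4$ for these $t$; Lemma~\ref{5} gives type $t^4$ only for $t\in\{4,5,6,13,19,22\}$, which does not cover $\{12,15,18,24,30\}$, so a direct appeal to Lemma~\ref{5} is not enough. The natural fix is to build these DGDDs recursively by Construction~\ref{1} (Weighting): start from a known $4$-GDD or $4$-DGDD master design, assign a constant weight $w$ to every point, and inflate each block with a super-simple $(4,2)$-DGDD of type $w^4$. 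For instance $12=3\cdot4$, $15=3\cdot5$, $18=3\cdot6$ suggest weighting a type-$3^4$ (or $3^6$) structure by an appropriate factor, while $24$ and $30$ factor through weights $6$ and the DGDDs already listed in Lemmas~\ref{4}--\ref{8}. By the Remark following Construction~\ref{1}, if the master and all inputs are super-simple directed with $d\ge\frac12$, the resulting DGDD is again super-simple with $d\ge\frac12$, so the bound propagates automatically.

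Once a super-simple $(4,2)$-DGDD of type $t^4$ with $d\ge\tfrac12$ is available for the relevant $t$, Construction 2 (with $g_i=t$, $u=4$, $\eta=1$) combines it with the super-simple $(t+1,4,2)$DDs from Lemma~\ref{3} to yield a super-simple $(4t+1,4,2)$DD, and its proof already records that $d\ge\tfrac12$ is preserved. Carrying this out for $t=12,15,18,24,30$ delivers $v=49,61,73,97,121$ respectively. Thus the overall architecture is: (i) invoke Lemma~\ref{3} for the fillers, (ii) construct the needed master DGDDs of type $t^4$ via Weighting from the small DGDDs of Lemmas~\ref{4}--\ref{8}, and (iii) apply Construction 2 with $\eta=1$.

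The main obstacle will be step (ii): the DGDDs of type $t^4$ for $t\in\{12,15,18,24,30\}$ are not listed directly, so I must verify that a suitable master design and a matching weight exist so that every inflated block receives a super-simple $(4,2)$-DGDD of type $w^4$ (or more generally $w^k$ for the block size $k$ of the master) already proved to exist. Concretely this means checking that factorizations such as $12=3\cdot4,\ 18=3\cdot6,\ 24=4\cdot6,\ 30=6\cdot5$ can each be matched to an existing super-simple DGDD of type $w^4$ from Lemma~\ref{5} (e.g. $w\in\{4,5,6,13,19,22\}$) together with a master $4$-GDD of the complementary order; the case $t=15$, where the only clean split is $3\cdot5$ and one must confirm a super-simple DGDD of type $5^4$ is on hand (it is, by Lemma~\ref{5}), is the one I would treat most carefully. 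Confirming super-simplicity is inherited through Weighting and that the arithmetic $4t+1$ lands on the intended targets is routine, but the bookkeeping of which small DGDD feeds which block is where the real content lies.
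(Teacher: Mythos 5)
Your proposal is correct and matches the paper's proof in all essentials: the paper likewise obtains super-simple $(4,2)$-DGDDs of types $12^4$, $15^4$, $18^4$, $24^4$, $30^4$ by applying Construction \ref{1} to the type-$t^4$ DGDDs of Lemma \ref{5} ($t\in\{4,5,6\}$) together with a $TD(4,3)$, $TD(4,4)$ or $TD(4,5)$, and then fills groups with the $(13,4,2)$, $(16,4,2)$, $(19,4,2)$, $(25,4,2)$ and $(31,4,2)$ designs of Lemma \ref{3} via Construction 2 with $\eta=1$. The only difference is that you swap the roles of master and input in the weighting step (undirected $TD$ as master, directed DGDD as ingredient, rather than the paper's directed master weighted through a $TD$), which yields the same DGDDs and preserves $d\geq\frac{1}{2}$ just as well.
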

\begin{proof}
By using a super-simple $ (4,2)-$DGDD of group type $ t^{4} $, 
$4 \leq t \leq 6 $  by Lemma \ref{5} with $d\geq \frac{1}{2}$ and
applying Construction \ref{1} by  using a $TD(4, 3)$ as an input design, we
obtain a super-simple $ (4,2)-$DGDD of group type $(3t)^{4}$ with
$d\geq \frac{1}{2}$. On the other hand, for $4\leq t\leq 6$ there exists a super-simple $(3t+1,4,2)$DD  with
$d\geq \frac{1}{2}$. So by Construction 2, there exists a super-simple $(12t+1,4,2)$DD with
$d\geq \frac{1}{2}$ for $4\leq t\leq 6$.\\

By considering  a super-simple  $(4, 2)-$DGDD of group type $6^4$ with $d \geq \frac{1}{2}$,
 obtained in Lemma 3 
 and applying Construction 1 by  using a $TD(4, m)$ in which
$m \in \{4, 5\}$ as an input design, we obtain a super-simple $(4, 2)-$DGDD of group
type ${(6m)}^4$ with
$d\geq \frac{1}{2}$. By considering the existence of super-simple $(6m+1,4,2)$DD from Lemma \ref{3} and using Construction 2, we conclude that there exists a  super-simple $(24m + 1, 4, 2)$DD with
$d\geq \frac{1}{2}$.
\end{proof}
\begin{lemma}
There exists a super-simple $(v,4,2)$DD for $ v \in \{64,100,112 \} $ with $d\geq \frac{1}{2}$.
\end{lemma}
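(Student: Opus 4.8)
The plan is to follow the same template used in the preceding lemma, writing each target order as $v=4wt+\eta$ with $\eta=0$ and realizing $v$ by filling a uniform directed group divisible design with four groups. Concretely, for each $v\in\{64,100,112\}$ I would first build a super-simple $(4,2)$-DGDD of type $(wt)^4$ by weighting one of the designs of Lemma \ref{5}, and then fill its four groups by Construction 2.

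For the weighting step I would start from a super-simple $(4,2)$-DGDD of type $t^4$ (Lemma \ref{5}) and apply Construction \ref{1} with a transversal design $TD(4,w)$ as the input design on each block; since the master design has index $2$ and the transversal design has index $1$, the output is a super-simple $(4,2)$-DGDD of type $(wt)^4$. The three cases split as follows: for $v=64$ take $t=4$, $w=4$, giving type $16^4$; for $v=100$ take $t=5$, $w=5$, giving type $25^4$; and for $v=112$ take $t=4$, $w=7$, giving type $28^4$. In each case $TD(4,w)$ exists because $2$ mutually orthogonal Latin squares of orders $4,5,7$ all exist, so the required input designs are available.

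To finish, I would apply Construction 2 with $\eta=0$, replacing each group of size $g\in\{16,25,28\}$ by a super-simple $(g,4,2)$DD; all of $16,25,28$ lie in the list of Lemma \ref{3} and are $\equiv 1 \pmod 3$, so these fillings exist. This yields super-simple $(4\cdot 16,4,2)=(64,4,2)$, $(4\cdot 25,4,2)=(100,4,2)$, and $(4\cdot 28,4,2)=(112,4,2)$ directed designs. The inequality $d\geq\frac12$ should then propagate automatically: the Remark following Construction \ref{1} guarantees $d\geq\frac12$ for the weighted DGDDs, and Construction 2 preserves $d\geq\frac12$ whenever both the master DGDD and all filling designs satisfy it.

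I expect the only genuinely delicate point to be the bookkeeping that $d\geq\frac12$ survives the weighting by the (index-one, undirected) transversal design — exactly the step that is also invoked, without incident, in the preceding lemma. Once one grants, as there, that orienting the $TD(4,w)$ blocks consistently with the master's block orientations keeps the resulting DGDD super-simple and keeps its smallest defining sets large, the argument is a direct parameter substitution into Constructions \ref{1} and 2; no new combinatorial obstruction arises beyond confirming the existence of the small ingredients already listed.
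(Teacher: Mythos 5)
Your proposal is correct and follows essentially the same route as the paper: the paper likewise takes the super-simple $(4,2)$-DGDDs of types $4^4$ and $5^4$ from Lemma \ref{5}, applies Construction \ref{1} with $TD(4,m)$ for $m\in\{4,7\}$ (resp.\ $m=5$) to get types $(4m)^4$ and $25^4$, and fills groups via Construction 2 with the super-simple $(16,4,2)$, $(28,4,2)$ and $(25,4,2)$DDs of Lemma \ref{3}, invoking the Remark to carry $d\geq\frac{1}{2}$ through. Your choices $t=4,w=4$; $t=5,w=5$; $t=4,w=7$ reproduce the paper's parameters exactly, including the caveat (also left implicit in the paper) about orienting the undirected transversal-design blocks in the weighting step.
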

\begin{proof}
By using  a
super-simple $(4, 2)-$DGDD of group type $4^4$ with $d\geq \frac{1}{2}$ from Lemma \ref{5} and applying
Construction 1 by  using a $TD(4, m)$ in which $m\in \{4, 7\}$ as an input design, we obtain
a super-simple $(4,2)-$DGDD of group type ${(4m)}^4$ with $d\geq \frac{1}{2}$. By Lemma \ref{3}, there exists a
super-simple $(4m, 4, 2)$DD with $d\geq \frac{1}{2}$ for $m\in \{4, 7\}$ . Then by Construction 2, we obtain a super-simple $(16m, 4, 2)$DD.\\

By considering  a super-simple $(4,2)-$DGDD of group type $5^4$ with $d\geq \frac{1}{2}$ from Lemma \ref{5} and applying Construction 1 by using a $TD(4, 5)$ as an
input design, we obtain a super-simple $(4, 2)-$DGDD of group type ${(25)}^4$ with $d\geq \frac{1}{2}$. Since by Lemma \ref{3} there exists
a super-simple $(25, 4, 2)$DD with $d\geq \frac{1}{2}$, then by Construction 2, we obtain a super-simple $(100, 4, 2)$DD with $d\geq \frac{1}{2}$.
\end{proof}
\begin{lemma}
There exists a super-simple $(v,4,2)$DD for $ v \in \{70,127 \} $ with $d\geq \frac{1}{2}$.
\end{lemma}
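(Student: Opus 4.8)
The plan is to realize each of the two orders as the output of the recursive machinery of Section~2, fed by the small directed designs of Lemma~\ref{3} and the super-simple DGDDs of Lemmas~\ref{4} and~\ref{5}. The two values behave quite differently: $127$ is a clean ``Wilson'' value handled by Construction~3, whereas $70$ resists Construction~3 and forces a weighting-then-filling argument.

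For $v=127$ I would write $127 = 3\cdot 42 + 1$ and apply Construction~3 with weight $\alpha=3$. As the master $K$-GDD I would take a transversal design $TD(6,7)$, that is, a $\{6\}$-GDD of type $7^6$ (it exists because $4$ mutually orthogonal Latin squares of order $7$ exist), so here $K=\{6\}$, each $g_i=7$ and $\sum g_iu_i=42$. The two ingredients Construction~3 requires are then on hand: a super-simple $(4,2)$-DGDD of type $3^6$ (Lemma~\ref{4}, $t=6$) for the single block size $6\in K$, and a super-simple $(3\cdot 7+1,4,2)=(22,4,2)$DD (Lemma~\ref{3}) for the group size $7$. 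Construction~3 then delivers a super-simple $(3\cdot 42+1,4,2)=(127,4,2)$DD, and $d\ge\tfrac12$ is inherited through the remark following that construction.

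For $v=70$ the analogous attempt fails. Writing $70=3\cdot 23+1$ would require a master GDD of order $23$ with all block sizes among $\{6,7,8,9,13\}$ (the sizes for which a $3^k$-DGDD is available) and all groups of size at least $3$ (so that each $(3g_i+1,4,2)$DD exists). Since a block meets each group at most once, the block sizes (all $\ge 6$) are bounded by the number of groups, which—groups of size $\ge 3$ summing to $23$—is $6$ or $7$; both cases are eliminated by short counting arguments (for $6$ groups the unequal group sizes cannot support a transversal-only GDD, and for $7$ groups the only admissible type $3^6 5^1$ fails the replication equation $6a+5b=20$ for a point in a size-$3$ group together with the pair count $5t=18$ forced on the size-$5$ group). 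I would therefore factor $70=10\cdot 7$ and build in two steps. First, using Construction~1 (Weighting) with a $4$-GDD of type $2^7$ as master (it exists since $7\equiv 1\pmod 3$) and the super-simple $(4,2)$-DGDD of type $5^4$ of Lemma~\ref{5} as the input design on every block (weight $5$ at each point), I obtain a super-simple $(4,2)$-DGDD of type $10^7$. Second, I apply Construction~2 with $\eta=0$, filling each group of size $10$ with a super-simple $(10,4,2)$DD from Lemma~\ref{3}, to get a super-simple $(10\cdot 7,4,2)=(70,4,2)$DD with $d\ge\tfrac12$.

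The main obstacle is precisely the weighting step that produces the $10^7$-DGDD. In the lemmas of this section the master is always the directed object and a plain transversal design is the input, whereas here the orientation of the final blocks must be read off the directed input DGDD while the master $2^7$ is undirected. I would therefore verify this slightly more general weighting directly: that every ordered pair lying in two different final groups is covered exactly twice (each such unordered pair lies in a unique master block, over which the input DGDD covers the ordered pair twice), that any two resulting blocks meet in at most two points (distinct master blocks share at most one point, so blocks from different copies share at most one point, while blocks from a common copy inherit super-simplicity of the input), and that the disjoint volume-$2$ trades of the input copies stay mutually disjoint so that $d\ge\tfrac12$ persists. These are exactly the properties underpinning the remark after Construction~1, now confirmed for an undirected master paired with a directed input.
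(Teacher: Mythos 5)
Your proposal is correct, and it splits interestingly against the paper: for $v=70$ your construction is exactly the paper's (master $4$-GDD of type $2^7$ on $Z_{14}$, weight $5$ via Construction \ref{1} with the super-simple $(4,2)$-DGDD of type $5^4$ from Lemma \ref{5}, then Construction 2 with $\eta=0$ filling groups with $(10,4,2)$DDs from Lemma \ref{3}), but for $v=127$ you take a genuinely different route. The paper stays with the same master $2^7$ GDD, gives every point weight $9$ using the super-simple $(4,2)$-DGDD of type $9^4$ from Lemma \ref{6} to obtain type $18^7$, and fills with $\eta=1$ using $(19,4,2)$DDs; you instead apply Construction 3 with $\alpha=3$ to a $TD(6,7)$ (which exists since $N(7)=6$), using the type-$3^6$ DGDD of Lemma \ref{4} and $(22,4,2)$DDs from Lemma \ref{3}, giving $3\cdot 42+1=127$. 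Both are valid; yours is essentially the Main Theorem's weight-$3$ template specialized to the uniform GDD $7^6$ and avoids Lemma \ref{6} entirely, while the paper's choice handles $70$ and $127$ under one roof with a single master and two input DGDDs. A genuine merit of your write-up is the explicit verification of the mixed weighting step (undirected master, directed input): Remark 1 as stated assumes both master and input are directed, yet the paper's own proof here (like yours) uses precisely the mixed mode, and your check --- each cross pair lies in a unique master block over which the input DGDD covers the ordered pair exactly twice, super-simplicity follows since distinct master blocks share at most one point, and the disjoint volume-$2$ trades of the input copies remain disjoint --- is exactly the argument the paper leaves implicit.

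One small caveat, not load-bearing: your side argument that Construction 3 cannot reach $70$ overlooks the seven-group type $3^5 4^2$ (seven groups of size at least $3$ summing to $23$ admit two types, not one), and your displayed equations are garbled; the correct counts are $5a+6b=20$ for a point in a size-$3$ group (solvable, forcing all its blocks to have size $6$) and $5a+6b=19$ for a point in a size-$4$ group (no solution), from which both candidate types are indeed eliminated. Since this digression only motivates your factorization $70=10\cdot 7$ and the actual construction stands on its own, it does not affect the validity of the proof.
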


\begin{proof}
We have a  $ 4- $GDD of group type $ 2^{7} $ with  set 
$ X=Z_{14} $, group set  $ \G=\{ \{0,7\}+i \, | \,  \ 0 \leq i \leq 6 \} $ and  block set $ \B = \{( 0,1,4,6 )  $ mod $ 14\}$.
Applying Constraction \ref{1} and using a super-simple $ (4,2)- $DGDD of group type $ 5^{4} $
from  Lemma \ref{5} with $d\geq \frac{1}{2}$ as an input design, we obtain a super-simple $ (4,2)-$DGDD of 
group type $ 10^{7} $ with $d\geq \frac{1}{2}$. By Lemma \ref{3} and Construction 2 we obtain a super-simple $(70,4,2)$DD with
 $d\geq \frac{1}{2}$.\\

By using a $4-$GDD of group type $2^{7}$, applying Construction \ref{1} and using
a super-simple $(4,2)-$DGDD of group type $ 9^{4} $ (see Lemma \ref{6}) with 
$d\geq \frac{1}{2}$ as an input design, we get
a super-simple $(4,2)-$DGDD of group type $ 18^{7} $ with $d\geq \frac{1}{2}$. Since $18 + 1 = 19$, by Lemma
\ref{3} and Construction 2,  we obtain a super-simple $(127,4,2)$DD with $d\geq \frac{1}{2}$.
\end{proof}
\begin{lemma}
There exists a super-simple $(v,4,2)$DD for $ v \in \{55, 82, 85, 106, 109,$\\$ 118\} $ with $d\geq \frac{1}{2}$.
\end{lemma}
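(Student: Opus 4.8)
The plan is to treat all six values uniformly by the same two-step recursive recipe used in the preceding lemmas, observing that each $v$ satisfies $v-1 = 3mt$ for a suitable choice of $t\in\{6,7,9,13\}$ (the sizes for which a type-$3^t$ DGDD is available) and $m\in\{3,4,5\}$. First I would take the super-simple $(4,2)$-DGDD of type $3^t$ guaranteed by Lemma \ref{4}, which has $d\geq\frac{1}{2}$, and use it as the master design in the Weighting Construction (Construction \ref{1}), assigning every point the constant weight $m$ and replacing each (size-$4$) block by a $TD(4,m)$ as input design. Since the master has index $2$ and the transversal design has index $1$, the product has index $2$, and the Remark following Construction \ref{1} guarantees that the result is a super-simple $(4,2)$-DGDD of type $(3m)^t$ still satisfying $d\geq\frac{1}{2}$. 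Then I would apply Construction 2 with $\eta=1$, filling each group of size $3m$ with the small super-simple $(3m+1,4,2)$DD supplied by Lemma \ref{3}; this yields a super-simple $(3mt+1,4,2)\mathrm{DD}=(v,4,2)\mathrm{DD}$ with $d\geq\frac{1}{2}$.

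Concretely I would organize the six cases in the following table, each row being one instance of the recipe above.
\begin{center}
\begin{tabular}{|c|c|c|c|c|}
\hline
$t$ & $m$ & DGDD type $(3m)^t$ & filling $(3m+1,4,2)$DD & $v=3mt+1$ \\
\hline
$6$  & $3$ & $9^{6}$   & $(10,4,2)$ & $55$  \\
$9$  & $3$ & $9^{9}$   & $(10,4,2)$ & $82$  \\
$7$  & $4$ & $12^{7}$  & $(13,4,2)$ & $85$  \\
$7$  & $5$ & $15^{7}$  & $(16,4,2)$ & $106$ \\
$9$  & $4$ & $12^{9}$  & $(13,4,2)$ & $109$ \\
$13$ & $3$ & $9^{13}$  & $(10,4,2)$ & $118$ \\
\hline
\end{tabular}
\end{center}
The ingredients are all in hand: the type-$3^t$ DGDDs for $t\in\{6,7,9,13\}$ come from Lemma \ref{4}; the transversal designs $TD(4,m)$ for $m\in\{3,4,5\}$ exist (each requires only two MOLS of order $m$, which are available); and the filling designs $(10,4,2)\mathrm{DD}$, $(13,4,2)\mathrm{DD}$, and $(16,4,2)\mathrm{DD}$ all appear in Lemma \ref{3}, each with $d\geq\frac{1}{2}$.

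The point requiring care, rather than a genuine obstacle, is the bookkeeping: I must verify for each target that $v-1$ factors as $3mt$ with $t$ an admissible DGDD size, $m$ an admissible transversal index, and $3m+1$ an admissible small-design order — which the table confirms. Once these factorizations are fixed, nothing further needs to be checked by hand, because both the preservation of the super-simple property and the inequality $d\geq\frac{1}{2}$ propagate automatically: the former from the definitions of super-simple DGDD and the $TD$ having pairwise block-intersection at most one point, and the latter from the Remark after Construction \ref{1} and the corresponding Remark for Construction 2. Thus the proof reduces to exhibiting the six rows and invoking Lemmas \ref{3} and \ref{4} together with Constructions \ref{1} and 2.
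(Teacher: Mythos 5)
Your proof is correct and matches the paper's argument essentially verbatim: the paper likewise takes the type-$3^t$ DGDDs of Lemma \ref{4} for $t\in\{6,7,9,13\}$, inflates them by weight $m\in\{3,4\}$ via Construction \ref{1} with a $TD(4,m)$ as input (handling $v=106$ separately with $m=5$, $t=7$, i.e.\ type $15^7$), and then fills the groups with the $(3m+1,4,2)$DDs of Lemma \ref{3} using Construction 2. Your table simply unifies the paper's two paragraphs into one parameter list, with exactly the same factorizations $v-1=3mt$ in every case.
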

\begin{proof}
By Lemma \ref{4}, we know that  there exists a super-simple
$(4, 2)-$DGDD of group type $3^t$ for $t \in\{6, 7, 9, 13\}$ with $d\geq \frac{1}{2}$. Applying Construction 1 by using $TD(4, m)$ in which $m\in\{3,4\}$ as an input design we obtain a super-simple $(4,2)-$DGDD of type ${(3m)}^t$ with $d\geq \frac{1}{2}$. By considering the existence of super-simple $(3m+1,4,2)$DD and by Construction 2, we conclude that there exists a super-simple $(3mt+1,4,2)$DD where  $m\in\{4,3\}$ and $t \in\{6, 7, 9, 13\}$ with $d\geq \frac{1}{2}$.\\

By using a super-simple $(4, 2)-$DGDD of group type $3^7$ with $d\geq \frac{1}{2}$, (see Lemma 2), applying
Construction 1 and using a $TD(4, 5)$ as an input design we obtain a super-
simple $(4, 2)-$DGDD of group type $15^7$ with $d\geq \frac{1}{2}$. By Lemma \ref{3}, there exists a super-simple $ (15+1, 4, 2)$DD,
then by Contruction 2, we obtain a super-simple $(105+1, 4, 2)$DD with $d\geq \frac{1}{2}$.
\end{proof}
\begin{lemma}
There exists a super-simple $(v,4,2)$DD for $ v \in \{115,133\} $ with $d\geq \frac{1}{2}$.
\end{lemma}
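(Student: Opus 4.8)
The plan is to produce both orders through Construction~3 (the inflation-by-$\alpha$ construction) with $\alpha=6$, exploiting the factorizations $115=6\cdot 19+1$ and $133=6\cdot 22+1$. So I would look for ingredient master $K$-GDDs on $19$ and $22$ points, chosen with $K\subseteq\{4,5\}$ so that the required super-simple $4$-DGDDs of type $\alpha^{k}=6^{k}$ are precisely the ones already in hand: the $6^{4}$ design of Lemma~\ref{5} and the $6^{5}$ design of Lemma~\ref{8}.

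For $v=115$ I would take a $\{4,5\}$-GDD of type $4^{4}3^{1}$ (nineteen points), obtained by deleting one point from a single group of a $TD(5,4)$; this transversal design exists since there are three mutually orthogonal Latin squares of order $4$. Feeding it into Construction~3 with $\alpha=6$ needs the super-simple $6^{4}$- and $6^{5}$-DGDDs (Lemmas~\ref{5} and~\ref{8}) for the blocks of size $4$ and $5$, together with super-simple $(6\cdot 4+1,4,2)=(25,4,2)$ and $(6\cdot 3+1,4,2)=(19,4,2)$ directed designs for the two group sizes, both supplied by Lemma~\ref{3}. The output is a super-simple $(6\cdot 19+1,4,2)=(115,4,2)$DD. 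For $v=133$ I would run the identical machine on a $\{4,5\}$-GDD of type $5^{4}2^{1}$ (twenty-two points), obtained by truncating one group of a $TD(5,5)$ down to two points; here $\alpha=6$ again calls for the $6^{4}$- and $6^{5}$-DGDDs and now for the $(6\cdot 5+1,4,2)=(31,4,2)$ and $(6\cdot 2+1,4,2)=(13,4,2)$ directed designs of Lemma~\ref{3}, yielding a super-simple $(6\cdot 22+1,4,2)=(133,4,2)$DD.

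Because the two DGDDs and all four small directed designs used as ingredients satisfy $d\ge\tfrac12$, the Remark following Construction~3 gives $d\ge\tfrac12$ for both constructed designs. The step I expect to be the real obstacle is choosing the master GDDs correctly: one cannot get away with a uniform $\{4\}$-GDD, since a $\{4\}$-GDD of type $4^{4}3^{1}$ fails the replication condition at the group of size $3$ (a point there would have to lie in $16/3$ blocks), so a block of size $5$ — and hence the nonstandard $6^{5}$-DGDD of Lemma~\ref{8} — is forced into the argument. This is exactly why the truncated transversal designs $TD(5,4)$ and $TD(5,5)$ are the natural masters, and verifying that every block size and every group size that arises corresponds to an already-constructed super-simple sub-design is the bookkeeping that makes the proof go through.
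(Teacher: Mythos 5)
Your proposal is correct and is essentially the paper's own proof: the paper likewise truncates a $TD(5,4)$ and a $TD(5,5)$ to get $\{4,5\}$-GDDs of types $4^{4}3^{1}$ and $5^{4}2^{1}$, gives every point weight $6$ using the super-simple $(4,2)$-DGDDs of types $6^{4}$ and $6^{5}$ from Lemmas \ref{5} and \ref{8}, and fills the inflated groups of sizes $24,18$ (resp.\ $30,12$) plus one new point with the $(25,4,2)$- and $(19,4,2)$-DDs (resp.\ $(31,4,2)$- and $(13,4,2)$-DDs) of Lemma \ref{3}, preserving $d\geq\frac{1}{2}$ throughout. Your single invocation of Construction 3 with $\alpha=6$ is exactly the composite of the paper's two steps (Construction \ref{1} followed by Construction 2 with $\eta=1$), so the route, the master GDDs, and all ingredient designs coincide.
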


\begin{proof}
For $ v=115 $, we delete one point from the last group of a $TD(5, 4)$ to obtain a $(\{4,5 \})-$
GDD of group type $ 4^{4}3^{1} $. By applying Construction \ref{1} and using  a super-simple $(4,2)-$DGDDs
of group type $ 6^{4} $ and $ 6^{5} $ with $d\geq \frac{1}{2}$ from Lemma \ref{5} and Lemma \ref{8}, 
as input designs,  
we get a super-simple $(4,2)-$DGDD of group type $ 24^{4} 18^{1}$,
with $d\geq \frac{1}{2}$. 
Since for each group G,  $ | G |+1\in \{19,25\}$, 
 by Lemma \ref{3} and Construction 2, there exsits a super-simple$ (115,4,2)$DD with $d\geq \frac{1}{2}$.\\
 
For $ v=133 $, we delete three points from the last group of a $TD(5,5)$ to obtain a $(\{4, 5\}, 1)-$GDD
of group type $5^{4}2^{1}$. Applying Construction \ref{1} and using  super-simple $(4,2)-$DGDDs of group
types $6^{ 4}$ and $6^{5}$ as input designs, we get a super-simple $(4,2)-$DGDD of
group type $30^{4}12^{1}$. Since for each group G, $ | G |+1\in \{13,31\}  $,
 by Lemma \ref{3} and Construction 2, there exsits a super-simple $ (133,4,2)$DD.
 \end{proof}

Let $N(m)$ be the number of mutually orthogonal latin squares of order $m$. The following lemmas are generalized forms of Lemmas 4.27 and 4.28 in \cite{chen1}.
\begin{lemma}\label{18}
Let $N(m) \geq 6$, ${a,b} \in [0,m]$. If there exists a super-simple $ (v,4,2)$DD
for $v \in \{3m + 1,3a + 1,3b + 1\}$ with $d\geq \frac{1}{2}$,
 then there exists a super-simple $ (18m + 3a + 3b + 1,4,2)$DD with $d\geq \frac{1}{2}$.
\end{lemma}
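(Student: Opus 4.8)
The plan is to realize $18m+3a+3b+1$ as $3(6m+a+b)+1$ and to produce the design by applying the weighting machinery of Construction 3 with inflation factor $\alpha=3$ to a suitably truncated transversal design.

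First I would use the hypothesis $N(m)\geq 6$, which is equivalent to the existence of a $TD(8,m)$, i.e.\ an $8$-GDD of type $m^8$. Since $a,b\in[0,m]$, I would truncate two of its eight groups down to sizes $a$ and $b$ respectively by deleting the appropriate points. Each block of the $TD(8,m)$ meets every group in exactly one point, so after truncation a block loses $0$, $1$, or $2$ points and hence has size $6$, $7$, or $8$. This yields a $\{6,7,8\}$-GDD of type $m^6a^1b^1$ on $6m+a+b$ points.

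Next I would apply Construction 3 with $\alpha=3$ and $K=\{6,7,8\}$ to this GDD. The two families of ingredients it requires are both available. The super-simple $4$-DGDDs of type $3^k$ for each $k\in\{6,7,8\}$ are supplied by Lemma~\ref{4}, which produces them for $t\in\{6,7,8,9,13\}$; and the super-simple $(3g+1,4,2)$DDs for the group sizes $g\in\{m,a,b\}$ are precisely the designs on $\{3m+1,3a+1,3b+1\}$ assumed in the statement, each with $d\geq\frac12$. Construction 3 then produces a super-simple $\bigl(3(6m+a+b)+1,4,2\bigr)$DD, that is, a super-simple $(18m+3a+3b+1,4,2)$DD.

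Finally, the inequality $d\geq\frac12$ for the output follows from the Remark attached to Construction 3, since every ingredient $3^k$-DGDD (Lemma~\ref{4}) and every fill-in design on $\{3m+1,3a+1,3b+1\}$ satisfies $d\geq\frac12$. The only points needing care are the degenerate cases $a=0$ or $b=0$, where the corresponding truncated group disappears and the realized block sizes shrink to $\{6,7\}$ or $\{6\}$; in these cases the spurious requirement of a $(1,4,2)$DD never arises, and Lemma~\ref{4} still supplies the needed $3^k$-DGDDs. I expect the main obstacle to be purely organizational: confirming that truncation yields exactly block sizes in $\{6,7,8\}$ and that Lemma~\ref{4} covers precisely this range of $k$ — both of which hold — rather than any genuine difficulty.
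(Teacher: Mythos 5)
Your proposal is correct and follows essentially the same route as the paper: truncate two groups of a $TD(8,m)$ (guaranteed by $N(m)\geq 6$) to obtain a $\{6,7,8\}$-GDD of type $m^6a^1b^1$, inflate by weight $3$ using the super-simple $(4,2)$-DGDDs of type $3^k$ from Lemma~\ref{4}, and fill with the hypothesized designs on $3m+1$, $3a+1$, $3b+1$ points. The only cosmetic difference is that you invoke Construction~3 (which bundles the weighting and filling steps), whereas the paper applies Construction~\ref{1} followed by Construction~2; your handling of the degenerate cases $a=0$ or $b=0$ is a small extra care the paper leaves implicit.
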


\begin{proof}
Delete $m - a$ and $m - b$ points from the last two groups of a $TD(8, m)$ to
obtain a $(\{6,7, 8\})-$GDD of group type $m^{6}a^{1}b^{1}$. Apply Construction \ref{1}
and use  super-simple $(4,2)-$DGDDs of group type $3^{k}$ for $k \in \{6,7,8,9\} $ as input designs.
We obtain a super-simple $ (4,2)-$DGDD of group type $(3m)^{6} (3a)^{1} (3b)^{1}$.
Since by Lemma \ref{1} there exists a super-simple $ (v,4,2)$DD for $v \in \{3m + 1,3a + 1,3b + 1\}$, 
 we conclude that there exists a super-simple$ (18m + 3a + 3b + 1,4,2)$DD.
This completes the proof.
\end{proof}
\begin{lemma}\label{19}
Let $N(m) \geq 7$, ${a,b,c} \in [0,m]$. If there exists a super-simple $ (v,4,2)$DD
for $v \in \{3m + 1,3a + 1,3b + 1,3c+1\}$ with $d\geq \frac{1}{2}$,
 then there exists a super-simple $ (18m + 3a + 3b +3c+ 1,4,2)$DD with $d\geq \frac{1}{2}$.
\end{lemma}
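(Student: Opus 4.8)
The final statement (Lemma~\ref{19}) is a direct analogue of the immediately preceding Lemma~\ref{18}, with one extra truncated group and the orthogonality hypothesis bumped from $N(m)\geq 6$ to $N(m)\geq 7$. The plan is therefore to mirror the proof of Lemma~\ref{18} exactly, starting from a transversal design with enough groups to accommodate three truncations rather than two. First I would invoke the standard fact that $N(m)\geq 7$ guarantees the existence of a $TD(9,m)$ (since $N(m)\geq 7$ means there are at least $7$ MOLS of order $m$, which is equivalent to a $TD(9,m)$). I would then delete $m-a$, $m-b$, and $m-c$ points from the last three groups of this $TD(9,m)$. Truncating three groups of a $9$-group transversal design yields a $(\{6,7,8,9\})$-GDD of group type $m^{6}a^{1}b^{1}c^{1}$, since each surviving block meets all six full groups and some subset of the three truncated groups, so block sizes range over $6,7,8,9$.

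Next I would apply Construction~\ref{1} (Weighting) with the uniform weight $3$, using as input designs super-simple $(4,2)$-DGDDs of group type $3^{k}$ for every block size $k\in\{6,7,8,9\}$ that occurs. These input designs are furnished by Lemma~\ref{4}, which supplies super-simple $(4,2)$-DGDDs of type $3^{t}$ for $t\in\{6,7,8,9,13\}$ with $d\geq\frac{1}{2}$; in particular all four required values $k\in\{6,7,8,9\}$ are covered. The remark following Construction~\ref{1} ensures that, because the master GDD can be directed and all the input designs satisfy $d\geq\frac{1}{2}$, the resulting object is a super-simple $(4,2)$-DGDD of group type $(3m)^{6}(3a)^{1}(3b)^{1}(3c)^{1}$ with $d\geq\frac{1}{2}$.

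Finally I would fill in the groups using Construction~2. For each group $G$ of the new DGDD we have $|G|+1\in\{3m+1,3a+1,3b+1,3c+1\}$, and the hypothesis of the lemma is precisely that a super-simple $(v,4,2)$DD with $d\geq\frac{1}{2}$ exists for each of these orders. Applying Construction~2 with $\eta=1$ then produces a super-simple $((3m)\cdot 6+(3a)+(3b)+(3c)+1,\,4,\,2)$DD, that is, an $(18m+3a+3b+3c+1,4,2)$DD, and since both the master DGDD and all filling designs satisfy $d\geq\frac{1}{2}$, so does the result, completing the proof.

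The only genuine obstacle is bookkeeping rather than mathematics: one must confirm that truncating exactly three groups of a $9$-group TD can only produce blocks of sizes $6,7,8,9$ (never smaller), so that Lemma~\ref{4} covers every input type needed, and one must check the arithmetic $6\cdot 3m+3a+3b+3c+1=18m+3a+3b+3c+1$ for the final order. Both are routine, so the lemma follows with essentially no new ideas beyond those already deployed in Lemma~\ref{18}.
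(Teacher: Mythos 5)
Your proposal is correct and matches the paper's intended argument: the paper proves Lemma~\ref{19} simply by saying it is ``similar to Lemma~\ref{18}'', and your construction --- a $TD(9,m)$ from $N(m)\geq 7$, truncating the last three groups to get a $(\{6,7,8,9\})$-GDD of type $m^{6}a^{1}b^{1}c^{1}$, weighting by $3$ with the type-$3^{k}$ DGDDs of Lemma~\ref{4}, and filling groups via Construction~2 with $\eta=1$ --- is precisely that adaptation, with the $d\geq\frac{1}{2}$ bookkeeping handled as in the paper.
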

\begin{proof}The proof is similarly to the proof of Lemma \ref{18}.
\end{proof}
\begin{lemma}\label{21}

There exists a super-simple $(v,4,2)$DD for $ v \in M=\{136,139,$ \\ $277,280,283,292,295,298,301,409,412,415,
424,427,430,433,436,439,442,445,$ \\ $
448,454,457,460,463,478,481,496,499,553\} $ with $d\geq \frac{1}{2}$.
\end{lemma}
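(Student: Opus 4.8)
The plan is to obtain every member of $M$ by a single application of the two preceding lemmas (Lemma \ref{18} and Lemma \ref{19}), writing each target $v$ in one of the forms
$$v = 18m + 3a + 3b + 1 \qquad \mbox{or} \qquad v = 18m + 3a + 3b + 3c + 1 ,$$
and arranging the parameters so that their hypotheses hold. For a given $v$ I would first observe $v \equiv 1 \pmod 3$ and set $s = (v-1)/3$, so the task reduces to writing $s = 6m + a + b$ (for Lemma \ref{18}) or $s = 6m + a + b + c$ (for Lemma \ref{19}). The modulus $m$ must carry $N(m) \geq 6$ (respectively $N(m) \geq 7$) and the weights must satisfy $a, b, c \in [0, m]$; moreover the fill-in orders $3m+1$, $3a+1$, $3b+1$ (and $3c+1$) must each be an order for which a super-simple $(\cdot, 4, 2)$DD with $d \geq \frac{1}{2}$ has already been produced, i.e. an order appearing in Lemma \ref{3}, Lemma \ref{11}, or one of the other lemmas of this section, or the trivial order $1$ (obtained by setting the corresponding weight to $0$).

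The key simplification is to fix once and for all a short list of \emph{anchor} moduli with certified MOLS numbers, namely $m = 7$, $13$, $19$, $27$. These are prime powers, so $N(7) = 6$, $N(13) = 12$, $N(19) = 18$, $N(27) = 26$, comfortably meeting the bounds demanded by Lemmas \ref{18} and \ref{19}; and the associated orders $3m+1 \in \{22, 40, 58, 82\}$ are all already constructed ($22, 40, 58$ in Lemma \ref{3} and $82$ in an earlier lemma of this section). For the residual weights I would use only values in $\{0, 3, 4, 5, \ldots, m\}$: every order $3a+1$ with $0 \leq a \leq 19$ is already available except $3\cdot 1 + 1 = 4$ and $3 \cdot 2 + 1 = 7$, so the sole thing to avoid is the weights $a = 1$ and $a = 2$.

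With these anchors the set $M$ decomposes cleanly. The pair $\{136, 139\}$ comes from $m = 7$ via Lemma \ref{18}, e.g. $136 = 18\cdot 7 + 3\cdot 3 + 3 \cdot 0 + 1$ and $139 = 18\cdot 7 + 3\cdot 4 + 3\cdot 0 + 1$. The block $\{277, 280, 283, 292, 295, 298, 301\}$ comes from $m = 13$ via Lemma \ref{18}, e.g. $277 = 18\cdot 13 + 3\cdot 7 + 3\cdot 7 + 1$ and $301 = 18\cdot 13 + 3\cdot 11 + 3\cdot 11 + 1$. The long stretch from $409$ to $499$ comes from $m = 19$ via Lemma \ref{19}; here $s - 114$ ranges over small values that are easily split into three admissible weights, e.g. $409 = 18\cdot 19 + 3\cdot 8 + 3\cdot 7 + 3\cdot 7 + 1$ and $499 = 18\cdot 19 + 3\cdot 19 + 3\cdot 19 + 3\cdot 14 + 1$. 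Finally $553 = 18\cdot 27 + 3\cdot 8 + 3\cdot 7 + 3\cdot 7 + 1$ arises from $m = 27$ via Lemma \ref{19}. For the write-up I would tabulate one admissible decomposition for each of the thirty values and verify line by line that every fill-in order lies in the already-established list and that $a, b, c \leq m$.

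The arithmetic is routine; the only genuine work, and the sole place an error could hide, is the simultaneous bookkeeping that (i) keeps $m$ at a value with a certified MOLS bound — for instance $N(22) = 3$ is far too small, which is exactly why I anchor at $19$ rather than at $\lfloor v/18 \rfloor$ — (ii) keeps every residual weight off the two forbidden values $1$ and $2$, and (iii) invokes only orders already constructed in the earlier lemmas. Because the admissible weights $\{0, 3, 4, \ldots, m\}$ are dense and the earlier lemmas supply a rich stock of small orders, such a decomposition exists for every $v \in M$, so the main obstacle is the exhaustive verification rather than any new design-theoretic construction.
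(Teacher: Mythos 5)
Your proposal is correct and runs on the same machinery as the paper: every $v\in M$ is obtained by a single application of Lemma~\ref{18} or Lemma~\ref{19}, with each weight drawn from $\{0\}\cup[3,m]$ (avoiding $1$ and $2$, since no super-simple $(4,4,2)$DD or $(7,4,2)$DD exists) so that every fill-in order $3x+1$ is among the designs already secured in Lemma~\ref{3} and the lemmas preceding Lemma~\ref{18}. The differences are only in the anchor moduli: the paper takes $m=7$ for $\{136,139\}$ exactly as you do, but then uses $m=11$ for the range $277$--$298$ and $m=16$ for $301$ and for part of $409$--$433$, whereas you take $m=13$ via Lemma~\ref{18} (needing only $N(13)\geq 6$) for the whole block $277$--$301$ and $m=19$ via Lemma~\ref{19} for all of $409$--$499$; both parameter sets satisfy the MOLS bounds and weight constraints, so this is a matter of taste. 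The one point where your route genuinely diverges, and to your credit, is $v=553$: the paper's table lists $m=19$ with $a+b+c=22$, which yields $18\cdot 19+3\cdot 22+1=409$, not $553$, and in fact $m=19$ cannot reach $553$ at all, since $a+b+c\leq 57$ caps the construction at $v=514$; your choice $m=27$, $a+b+c=22$ (with $N(27)=26\geq 7$ and the fill order $3\cdot 27+1=82$ supplied by the earlier lemma covering $v\in\{55,82,85,106,109,118\}$) is a valid repair of what is evidently a misprint in the paper. One small aside: your claim $N(22)=3$ should read $N(22)\geq 3$, since the exact value is not known, but nothing in your argument rests on it because you deliberately avoid such moduli.
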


\begin{proof}
For $ v \in \{136,139\} $, apply Lemma \ref{18} with $ m=7 $ and $ a+b \in \{ 3,4 \} $.\\
For $ v \in M \setminus \{136,139\} $ apply Lemma \ref{19} with the parameters shown in following table.
We conclude that for  these values of $ v $ there exists a super-simple $ (v,4,2)$DD.\\

Let $[a,b]_3^1$ denotes the set $\{v\vert  \  v\equiv 1$ (mod $3), \ \  a\leq v\leq b\}$.\\

 \begin{tabular}{|l|l|l|}
\hline $v$ & $m$ & $a+b+c \subseteq [3,m] \cup \{0\}$ \\ 
\hline $[277,298]_{3}^{1}$ & 11 & $a+b+c \in$[26,33]$ \setminus $\{29,30\} \\ 
 301 & 16 & $ a+b+c= $4 \\ 
 $[409,433]_{3}^{1}$  $ \setminus $\{418,421\}  & 16 & $a+b+c \in$[40,48]$ \setminus $\{43,44\} \\ 
$[436,463]_{3}^{1}$ $ \setminus $\{451\}  &  19 &  $a+b+c \in$[31,40]$ \setminus $\{36\} \\ 
\{478,481\} & 19 &$a+b+c \in$ \{45,46\} \\ 
\{496,499\} & 19 &$a+b+c \in$ \{51,52\}\\ 
  553 &  19 & $a+b+c =$ 22 \\
\hline 
\end{tabular}\\ 

This completes the proof.
\end{proof}
\begin{lemma}\cite{abel}\label{10}
There exists  a $PBD(n, (6,7,8,9\})$ for all $n$ except $n\in [10,30],$ \\ $[32,41], [45,47], [93,95], [98,101], [137,139],
 [142,150], [152,155], \{160, 161, 166,$ \\ $167,185\}$.
\end{lemma}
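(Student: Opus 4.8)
The plan is to treat this as a standard pairwise-balanced-design existence result, that is, to determine the set $B(\{6,7,8,9\})$ of all orders $n$ admitting a PBD$(n,\{6,7,8,9\})$. First I would record the necessary conditions. Writing $K=\{6,7,8,9\}$, set $\alpha(K)=\gcd\{k-1:k\in K\}=\gcd\{5,6,7,8\}=1$ and $\beta(K)=\gcd\{k(k-1):k\in K\}=\gcd\{30,42,56,72\}=2$. The local point-count and global pair-count conditions for a PBD reduce to $n-1\equiv 0\pmod{\alpha}$ and $n(n-1)\equiv 0\pmod{\beta}$, both of which hold for \emph{every} integer $n$. Hence there is no congruence obstruction: every $n\geq 6$ (together with the trivial $n\in\{0,1\}$) is an admissible candidate, and the content of the lemma is that existence fails only on the finite list displayed.

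For the existence half I would split the range of $n$ into an asymptotic tail and a bounded middle. Since $\alpha(K)=1$, Wilson's asymptotic existence theorem for PBDs supplies a threshold $n_0$ such that PBD$(n,K)$ exists for every $n\geq n_0$, disposing of all large orders at once. The remaining work is to push $n_0$ down to an explicit value and to clear the finite interval below it. The main engine here is the truncated transversal design: starting from a $TD(9,m)$ (equivalently $N(m)\geq 7$ mutually orthogonal latin squares) and deleting points from several groups so that each surviving block meets between six and nine groups, one obtains a $\{6,7,8,9\}$-GDD whose group sizes are governed by the truncation. Filling the groups with ingredient PBDs of smaller admissible orders, and invoking PBD-closure (induction on $n$), then yields PBD$(n,K)$ for the target order. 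This is exactly the mechanism already used in Lemmas~\ref{18} and~\ref{19} to build the $\{6,7,8\}$-GDDs there, now specialised to block-size set $K$; the MOLS thresholds $N(m)\geq 4,5,6,7$ control which of the four block sizes are available for a given $m$.

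The hard part will be the bounded middle range, roughly $30<n\lesssim 200$, where the asymptotic theorem is not yet in force and the values $N(m)$ are too small for the clean $TD(9,m)$ construction. Here I would proceed by a finite but intricate case analysis: combine truncated transversal designs of several widths with sporadic direct constructions (small resolvable designs, recursively filled group-divisible designs, and a handful of computer-found block sets) to cover each band of orders, always feeding previously built small PBDs back in through PBD-closure. Finally I would certify that the listed orders genuinely fail; for instance PBD$(10,K)$ is impossible because a single large block already forces two of its points to be repeated through any further block on the remaining point, and analogous counting and packing arguments rule out the other small exceptions, so that the exception set is precisely as stated. Orchestrating the middle range so that every admissible $n$ outside the finite list is reached, while simultaneously proving non-existence for those inside it, is the principal obstacle; the congruence and asymptotic parts are routine.
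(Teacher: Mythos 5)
The first thing to note is that the paper does not prove this lemma at all: it is imported verbatim from the Handbook of Combinatorial Designs \cite{abel}, where it represents the accumulated output of a long chain of PBD-closure computations in the literature. So there is no ``paper proof'' to match; you are proposing to reconstruct a deep compilation result from scratch. Your outline does correctly identify the standard machinery --- $\alpha(K)=\gcd\{5,6,7,8\}=1$ and $\beta(K)=\gcd\{30,42,56,72\}=2$, hence no congruence obstruction; Wilson's asymptotic theorem for the tail; truncated transversal designs, group filling, and PBD-closure for the middle range --- and this is indeed how such results are obtained. But as a proof the proposal has two genuine gaps. First, the entire content of the lemma lives in exactly the part you defer: Wilson's theorem in its classical form gives no effective threshold, so every order outside the exception list, including all $n\geq 186$, must be reached by explicit constructions, and this requires a long catalogue of individual TD truncations, GDDs and sporadic designs of which you exhibit none. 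Announcing that the middle range is ``the principal obstacle'' concedes that the lemma is not proved.

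Second, and more seriously, you misread what must be shown about the exceptional orders. The lemma asserts only existence for $n$ outside the list; it does \emph{not} assert nonexistence inside it. Your plan to ``certify that the listed orders genuinely fail'' therefore proves more than is claimed, and for most entries it cannot be carried out. A clean counting argument does kill $n=10$: the blocks through a point partition the other $9$ points into parts of sizes $k-1\in\{5,6,7,8\}$, and $9$ is not a sum of such parts (one part is at most $8$, two parts are at least $10$) --- note this is sharper than your version, which only handles the presence of a block of size $9$. But the argument already fails at $n=13$, since $12=5+7=6+6$, and for larger listed orders such as $185$ these values are recorded in \cite{abel} as (possible) exceptions with no nonexistence proof known. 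Since the paper only ever uses the lemma as a black box (via Corollary \ref{111} and the Main Theorem), the honest options are to cite it, as the authors do, or to verify the finitely many constructions explicitly; attempting nonexistence proofs on the exception set is neither needed nor, for most entries, currently possible.
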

\begin{corollary}\label{111}
There exists a $\{6,7,8,9\}-$GDD   with group type $ 5^{a} 6^{b} 7^{c} 8^{d}$,
 where  a,b,c,d  are nonnagative integers and $5a+6b+7c+8d=m$ for all $ m $, except $ m \in [9,29], [31,40], [44,46], [92,94], [97,100], [136,138],
 [141,149], [151,154], \{159, 160,$ \\ $165,166,184\} $. 
\end{corollary}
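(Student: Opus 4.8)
The plan is to derive this directly from Lemma \ref{10} via the point-deletion construction recorded in the preliminaries, namely that deleting one point from a $PBD(v,K)$ yields a $K$-GDD of type $g_1^{u_1}\cdots g_N^{u_N}$ with $g_i = k_i-1$ for $k_i \in K$. First I would take any $PBD(n,\{6,7,8,9\})$ and delete a single point $x$: its groups become the blocks through $x$ with $x$ removed. Since every such block has size $k\in\{6,7,8,9\}$, each resulting group has size $k-1\in\{5,6,7,8\}$, so the group type is automatically of the form $5^a6^b7^c8^d$ for nonnegative integers $a,b,c,d$. Counting points gives $5a+6b+7c+8d = n-1$, so setting $m=n-1$ produces exactly the GDDs claimed in the statement.

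Second, I would translate the existence range. By Lemma \ref{10} a $PBD(n,\{6,7,8,9\})$ exists for every $n$ outside the stated exceptional set, hence the GDD above exists for every $m=n-1$ with $n$ admissible. It then remains to carry the PBD exception set through the shift $n\mapsto n-1$: each excluded interval $[a,b]$ for $n$ becomes $[a-1,b-1]$ for $m$, and each isolated excluded value $n_0$ becomes $n_0-1$. Applying this to $[10,30],[32,41],[45,47],[93,95],[98,101],[137,139],[142,150],[152,155],\{160,161,166,167,185\}$ yields precisely $[9,29],[31,40],[44,46],[92,94],[97,100],[136,138],[141,149],[151,154],\{159,160,165,166,184\}$, matching the corollary.

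I expect no serious obstacle here: the argument is a single application of a known construction followed by an arithmetic index shift. The only point deserving a moment's care is to confirm that the shift introduces neither spurious nor missing exceptions, i.e.\ that every $m$ attained by some admissible $n$ does yield an admissible group type (which is automatic, as the block sizes are exactly $\{6,7,8,9\}$) and that the endpoint bookkeeping of each interval is handled correctly. That verification is routine, so I anticipate the proof being very short.
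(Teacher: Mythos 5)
Your proposal is correct and is exactly the paper's (implicit) argument: the corollary follows from Lemma \ref{10} by the point-deletion observation stated in the preliminaries, with each block through the deleted point of size $k\in\{6,7,8,9\}$ becoming a group of size $k-1\in\{5,6,7,8\}$, and the exception set shifted by $n\mapsto n-1$. Your bookkeeping of the shifted intervals and isolated values matches the corollary precisely.
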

Now, we are in a position to conclude the main result.\\
\textit{{\bf Main Theorem.} For all   $ v\equiv 1$  (\textsl{mod 3} )  and $ v\geq 10 $, there exists a super-simple $ (v,4,2)$DD with $d\geq \frac{1}{2}$.}

\begin{proof}
By Corollary \ref{111},  there exists $\{6,7,8,9\}-$GDD of order $m$ and group type $5^a6^b7^c8^d$ where $a,b,c$ and $d$ are nonnegative integers and $m=5a+6b+7c+8d$ for values of $m$ satisfied in Corollary \ref{111}. We apply Construction 1 to this GDD by using a weight 3 to get a super-simple $(4,2)-$DGDD of type $15^a18^b21^c24^d$. Finally, by Lemma \ref{3} and Construction 2, we get a super-simple $(3m+1,4,2)$DD. For the remaining values of $m$, the existence of super-simple $(3m+1,4,2)$DD has been proved in Lemmas \ref{3} and \ref{11}-\ref{21}.
The proof is complete.
\end{proof}


\begin{thebibliography}{hashemi98}
\bibitem{chen3} {H. Cao, K. Chen, R. Wei, {\it  Super-simple balanced incomplete block  designs with block size $4$ and index 5}, Discrete Mathematics}, {\bf
39}(2009), 2808-2814.
\bibitem{chen6} {K. Chen, {\it On the existence of super-simple $(v,4,3)$-BIBDs}, Journal of Combinatorial Mathematics and Combinatorial Computing}, {\bf
17}(1995), 149-159.
\bibitem{chen1} {K. Chen, {\it On the super-simple $(v,4,4)$-BIBDs}, Journal of Statistical Planning and Inference}, {\bf
51}(1996), 339-350.
\bibitem{chen5} {K. Chen, Z. Cao, R. Wei, {\it  Super-simple balanced incomplete block  designs with block size 4 and index 6}, Journal of Statistical Planning and Inference}, {\bf
133}(2005), 537-554.
\bibitem{chen11} {K. Chen, G. Chen, W. Li, R. Wei, {\it  Super-simple balanced incomplete block  designs with block size 5 and index 3}, Discrete Applied Mathematics}, {\bf
161}(2013), 2396-2404.
\bibitem{chen0} {K. Chen, Y.G. Sun, Y. Zhang, {\it Super-simple balanced incomplete block designs with block size 4 and index 8}, Utilitas Mathematica}, {\bf 91} (2013), 213-229.
\bibitem{chen2} {K. Chen, R. Wei, {\it  Super-simple $(v,5,5)$  designs}, Designs, Codes and Cryptography}, {\bf
39}(2006), 173-187.
\bibitem{chen4} {K. Chen, R. Wei, {\it  Super-simple $(v,5,4)$  designs}, Discrete Applied Mathematics}, {\bf
155}(2007), 904-913.
\bibitem{hans} {H. Dietrich, H.O.F. Gronau, D. Kreher, A. Ling, {\it  Super-simple $(v,5,2)$  designs}, Discrete Applied Mathematics, {\bf
39}(2004), 65-77.
\bibitem{abel} {	G. Ge, C. J.  Colbourn, J.H.  Dinitz, {\it  Pairwise Balanced  Designs, Handbook  of  Combinatorial  Designs,
second ed. }, Chapman Hall, CRC Press, Boca Raton}, {\bf }(2007), 231 - 256. 
\bibitem{Grannell} {M.J. Grannell, T.S. Griggs and K.A.S. Quinn, {\it  Smallest defining sets
of directed triple systems}, Discrete Mathematics}, {\bf
309}(2009), 4810-4818.
\bibitem{gronau} {H.O.F. Gronau, R.C. Mullin, {\it  On Super-simple $2-(v,4,\lambda)$  designs}, Journal of Combinatorial Mathematics and Combinatorial Computing}, {\bf
11}(1992), 113-121
\bibitem{es.} {E.S. Mahmoodian, N. Soltankhah and A.P. Street,  {\it On Defining Sets
of Directed Designs},  Australasian Journal of Combinatorics}, {\bf
19}(1999), 179-190. 
\bibitem{soltan} {N. Soltankhah,  {\it On directed trades},  Australasian Journal of Combinatorics}, {\bf
11}(1995), 59-66. 
\bibitem{soltankhah1} {N. Soltankhah, F. Amirzade, {\it Smallest defining sets of super-simple $2- (v,4,1)$ directed designs}, Utilitas Mathematica}, (to be appear)
\bibitem{soltankhah2} {N. Soltankhah, F. Amirzade, {\it  Super-simple $2-(v,5,1)$ directed designs and their Smallest defining sets},  Australasian Journal of Combinatorics}, {\bf
54}(2012), 85-106.
\bibitem{zhang} {Y. Zhang, K. Chen, Y. Sun, {\it  Super-simple  balanced incomplete block designs with block size 4 and 
index 9}, Journal of Statistical Planning and Inference}, {\bf
139}(2009), 3612-3624.

}\end{thebibliography}
\end{document}